\numberwithin{equation}{section}
\newtheorem{theorem}{Theorem}[section]
\newtheorem{lemma}[theorem]{Lemma}
\theoremstyle{definition}
\newtheorem{definition}[theorem]{Definition}
\theoremstyle{assumption}
\theoremstyle{remark}
\newtheorem{remark}[theorem]{Remark}
\numberwithin{equation}{section}
\begin{document}
\title{\LARGE\bf {Decay rates for a variable-coefficient wave equation with nonlinear time-dependent damping}}
\author{Menglan Liao\thanks{Corresponding author: Menglan Liao \newline \hspace*{6mm}{\it Email
addresses:} liaoml@hhu.edu.cn}}
\affil{College of Science, Hohai University, Nanjing, {\rm210098}, China}

\renewcommand*{\Affilfont}{\small\it}
\date{} \maketitle
\vspace{-20pt}

{\bf Abstract:}  In this paper, a class of variable-coefficient wave equations equipped with time-dependent damping and the nonlinear source is considered. We show that the total energy of the system 
decays to zero with an explicit and precise decay rate estimate under different assumptions on the feedback with the help of the method of weighted energy integral. 

{\bf Keywords:} Variable coefficient; Time-dependent damping; Energy estimates; Nonlinear source.
 
{\bf MSC(2020):}  35B35; 35L70; 35A01.

\thispagestyle{empty}
\section{Introduction}
In this paper, we are concerned with the following initial-boundary value problem for a class of variable-coefficient wave equations with time-dependent damping 
\begin{equation}
\label{1.1}
\begin{cases}
      u_{tt}-\mathcal{A}u+\gamma(t)g(u_t)=f(u)& \quad x\in \Omega,~t>0,  \\
      u(x,t)=0&\quad x\in \partial\Omega,~t>0,\\
     u(x,0)=u_0(x),~u_t(x,0)=u_1(x)&\quad x\in \Omega,
\end{cases}
\end{equation}
where $\Omega\subset \mathbb{R}^3$, for simplicity, is a bounded domain with $C^\infty$ boundary.  In particular, present results can be extended to bounded domain in $\mathbb{R}^n$ by adopting the Sobolev embeddings, and by adjusting some parameters imposed. Here $\gamma(t)g(u_t)$ is a time-dependent damping term, and $f\in C^1(\mathbb{R})$ such that $|f'(s)|\le C(|s|^{p-1}+1)$ with $1\le p<6$. To simplify computations, we choose $f(u):=|u|^{p-1}u$ with $1\le p<6$ in this paper. The second-order differential operator $\mathcal{A}$ is defined by
\[\mathcal{A}u=\mathrm{div}(A(x)\nabla u)=\sum_{i,j=1}^\infty\frac{\partial}{\partial x_i}\Big(a_{ij}(x)\frac{\partial u}{\partial x_j}\Big),\]
where $A(x)=(a_{i,j}(x))$ is symmetric and positive definite matrices with $a_{i,j}(x)\in C^\infty(\bar{\Omega})$ and  satisfies the uniform ellipticity conditions
\begin{equation}
\label{1.2}
\sum_{i,j=1}^na_{i,j}(x)\xi_i\xi_j\ge \omega \sum_{i=1}^n\xi_i^2,\quad x\in \bar{\Omega},~\omega>0.
\end{equation}
It is easily verified that the bilinear form $a(\cdot,\cdot): H_0^1(\Omega)\times H_0^1(\Omega)\to \mathbb{R}$ defined by
\[a(u,v)=\sum_{i,j=1}^n\int_\Omega a_{i,j}(x)\frac{\partial u}{\partial x_j}\frac{\partial u}{\partial x_i}dx=\int_\Omega A\nabla u\cdot \nabla vdx\]
is symmetric and continuous. Further, it follows from \eqref{1.2} that
\begin{equation}
\label{1.3}
a(u,v)\ge \omega\|\nabla u\|_2^2.
\end{equation}

 The problem of asymptotic stability of solutions of dissipative 
wave systems equipped with time-dependent nonlinear damping forces has been given a lot of attention.  P. Pucci and J. Serrin \cite{PS1996} investigated  the following nonlinear damped wave system with Dirichlet data
\begin{equation}
\label{421}
u_{tt}-\Delta u+Q(t,x,u,u_t)+f(x,u)=0,
\end{equation}
where the function $Q$ represents a \textit{nonlinear damping} satisfying $(Q(t,x,u,v),v)\ge 0$, $f$ is a \textit{restoring force}(that is $(f(x,u),u)\ge 0$). Based on the a priori existence of a suitable auxiliary function, they  proved the natural energy $Eu(t)$ associated with solutions of the system  satisfies $\lim_{t\to +\infty}Eu(t)=0$. Problem \eqref{421} is well-known in a variety of models in mathematical physics, for instance, elastic vibrations in a dissipative medium, the telegraphic equation, and the damped Klein-Gordon equation.    P. Pucci and J. Serrin \cite{PS1998} studied again problem \eqref{421} not only for potential energies which arise from restoring forces, but also for the effect of \emph{amplifying forces}. They pointed out that global asymptotic stability can no longer be expected, and should be replaced by local stability.  However, whether an explicit and precise decay rate estimate of the total energy of the system  can be obtained was unknown.  P. Martinez \cite{M2000} considered the following time-dependent dissipative systems
\[u_{tt}-\Delta u+\rho(t,u_t)=0\]
with Dirichlet boundary, where $\rho:\mathbb{R}^+\times \mathbb{R}\to \mathbb{R}$ is  a continuous function differentiable on $\mathbb{R}^+\times(-\infty,0)$.  By generalizing the method introduced 
to study autonomous wave equation damped with a boundary nonlinear velocity feedback $\rho(u_t)$ in \cite{M1999}, P. Martinez obtained that the total energy of the system decays to zero with an explicit and precise decay rate estimate under sharp assumptions on the feedback. M. Daoulatli \cite{D2011} studied problem \eqref{1.1} without $f(u)$, and they illustrated that if given suitable conditions on the nonlinear terms, and the damping is modeled by a continuous monotone function without any growth restrictions imposed at the origin and infinity, the decay rate of the energy functional was obtained by solving a nonlinear non-autonomous ODE.  The Cauchy problem for second order hyperbolic evolution equations with restoring force in a Hilbert space, under the effect of nonlinear time-dependent damping has also been studied, the interested readers can  refer to papers \cite{LX2020,LX2021,2LX2021,LX2023}.

It is certainly beyond the scope of the present paper to give a comprehensive review for dissipative wave systems equipped with time-dependent nonlinear damping forces. However, the literature on energy decay estimates is rare for the weak solution of the variable coefficient wave equations when it is concerned with the interaction between time-dependent damping and source term. Inspired by the paper \cite{M2000}, in this paper, the primary goal is to establish the energy decay rates when the time-dependent damping satisfies different assumptions.   Because of the  the interaction between time-dependent damping and source term, some new difficulties need to be solved. The outline of the paper is as follows. In Section \ref{sec2}, we shall give some assumptions, main results and several remarks. In Section \ref{sec3}, we prove that the local (in time) existence of the weak solution can be extended globally. Sections \ref{sec4} and \ref{sec5} are used to prove the energy decay rates.

\section{Preliminaries and main results}\label{sec2}

Throughout the paper, denote by $M$ the optimal embedding constant for the embedding theorem $H_0^1(\Omega) \hookrightarrow L^{p+1}(\Omega).$  The symbol $C$ is a generic positive constant, which may be different in various positions. $C_i$ $(i=0,1,2,\cdots,33)$ represent some positive constants.  We impose the following assumptions:

$\mathbf{(H_1)}$ $\gamma\in W_{loc}^{1,\infty}(\mathbb{R}^+)$ is bounded and nonnegative on $\mathbb{R}^+=[0,+\infty).$

$\mathbf{(H_2)}$  $g$ is continuous and monotone increasing feedback with $g(0)=0$. In addition, there exist positive constants $b_1$ and $b_2$ such that 
\[b_1|s|^{m+1}\le g(s)s\le b_2|s|^{m+1},\quad\text{ where }m\ge 1 \text{ and }|s|>1.\]


$\mathbf{(H_3)}$ $p\frac{m+1}{m}<6.$

$\mathbf{(H_4)}$ $u_0(x)\in H_0^1(\Omega),~u_1(x)\in L^2(\Omega).$

\begin{definition}[Weak solution]
A function $u(t):=u(t,x)$ is said to a weak solution of problem \eqref{1.1} on $[0,T]$ if  $u\in C([0,T];H_0^1(\Omega))$ with $u_t\in C([0,T];L^2(\Omega))\cap L^{m+1}(\Omega\times(0,T))$. In addition, for all $t\in [0,T]$, 
\begin{equation}
\label{2.1}
\begin{split}
&(u_t(t),\phi(t))-(u_t(0),\phi(0))+\int_0^ta(u(s),\phi(s))ds-\int_0^t\int_\Omega u_t(s)\phi_t(s)dxds\\
&\quad+\int_0^t\int_\Omega \gamma(t)g(u_t(s))\phi_t(s)dxds=\int_0^t\int_\Omega f(u(s))\phi(s)dxds
\end{split}
\end{equation}
for $\phi\in \{\phi:\phi\in C([0,T];H_0^1(\Omega))\cap L^{m+1}(\Omega\times(0,T))\text{ with }\phi_t\in C([0,T];L^2(\Omega))\}.$

\end{definition}

\begin{theorem}[Local existence]\label{thle}
Let $\mathbf{(H_1)}-\mathbf{(H_4)}$ hold, then there exists a local (in time) weak solutions $u(t)$ to problem \eqref{1.1} for $T>0$ depending on the initial quadratic energy $\mathscr{E}(0)$. Moreover, the following identity holds
\begin{equation}
\label{2.2}
\mathscr{E}(t)+\int_0^t\int_\Omega \gamma(t)g(u_t)u_tdxds=\mathscr{E}(0)+\int_0^t\int_\Omega f(u(s))u_t(s)dxds
\end{equation}
with the quadratic energy is defined by
\begin{equation}
\label{2.3}
\mathscr{E}(t)=\frac12\|u_t(t)\|_2^2+\frac12a(u(t),u(t)).
\end{equation}
\end{theorem}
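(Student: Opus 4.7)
The plan is to construct the solution by the Faedo--Galerkin method, using the eigenfunctions $\{w_j\}_{j\ge 1}\subset H_0^1(\Omega)$ of $-\mathcal{A}$ (which form an orthonormal basis of $L^2(\Omega)$) as approximating basis. Let $V_n=\mathrm{span}\{w_1,\dots,w_n\}$ and project the initial data onto $V_n$ so that $u_0^n\to u_0$ in $H_0^1(\Omega)$ and $u_1^n\to u_1$ in $L^2(\Omega)$. Looking for $u^n(t)=\sum_{j=1}^n c_j^n(t)w_j$ satisfying
\[
(u_{tt}^n,w_j)+a(u^n,w_j)+\gamma(t)(g(u_t^n),w_j)=(f(u^n),w_j),\qquad j=1,\dots,n,
\]
I would invoke Carathéodory's theorem to obtain a local solution on some interval $[0,T_n)$, since $\gamma$ is bounded and $g,f$ are continuous.

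The second step is to derive a priori estimates that yield a uniform lifespan $T>0$ depending only on $\mathscr{E}(0)$. Multiplying the approximate equation by $\dot c_j^n$ and summing gives the identity
\[
\mathscr{E}_n(t)+\int_0^t\!\!\int_\Omega \gamma(s)g(u_t^n)u_t^n\,dx\,ds=\mathscr{E}_n(0)+\int_0^t\!\!\int_\Omega f(u^n)u_t^n\,dx\,ds.
\]
For the source term I would use Young's inequality together with $(H_3)$: writing $|f(u^n)u_t^n|\le \varepsilon|u_t^n|^{m+1}+C_\varepsilon|u^n|^{p\frac{m+1}{m}}$ and noting that $p(m+1)/m<6$, Sobolev embedding $H_0^1(\Omega)\hookrightarrow L^{p(m+1)/m}(\Omega)$ bounds the latter by a power of $a(u^n,u^n)$. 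Absorbing the $|u_t^n|^{m+1}$ term into the damping (using $(H_2)$ on $\{|u_t^n|>1\}$) and applying a nonlinear Gronwall/comparison ODE argument to the scalar function $y(t)=\mathscr{E}_n(t)$ produces a time $T=T(\mathscr{E}(0))>0$ on which $\mathscr{E}_n(t)$, hence $\|u^n\|_{L^\infty(0,T;H_0^1)}$, $\|u_t^n\|_{L^\infty(0,T;L^2)}$, and $\|u_t^n\|_{L^{m+1}(\Omega\times(0,T))}$ are all uniformly bounded. In particular, by $(H_2)$, $\gamma(t)g(u_t^n)$ is uniformly bounded in $L^{(m+1)/m}(\Omega\times(0,T))$.

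Next I would pass to the limit. Up to subsequences, $u^n\rightharpoonup u$ weakly-$*$ in $L^\infty(0,T;H_0^1)$, $u_t^n\rightharpoonup u_t$ weakly-$*$ in $L^\infty(0,T;L^2)$ and weakly in $L^{m+1}(\Omega\times(0,T))$, and $\gamma g(u_t^n)\rightharpoonup \chi$ weakly in $L^{(m+1)/m}$. The Aubin--Lions lemma then gives strong convergence $u^n\to u$ in $C([0,T];L^q(\Omega))$ for any $q<6$, so (thanks again to $(H_3)$) $f(u^n)\to f(u)$ strongly in, say, $L^2(0,T;L^{6/5}(\Omega))$. Identifying $\chi=\gamma g(u_t)$ requires the monotonicity trick of Minty: for any test field $v$,
\[
\int_0^T\!\!\int_\Omega \gamma(s)\bigl(g(u_t^n)-g(v)\bigr)(u_t^n-v)\,dx\,ds\ge 0,
\]
and limiting the cross terms (using the limit equation tested against $u^n$ to replace the damping integral by energy quantities) identifies the weak limit. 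This shows $u$ satisfies the weak formulation \eqref{2.1}.

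The main obstacle is recovering the \emph{equality} \eqref{2.2} rather than only the weak lower-semicontinuity inequality. Weak convergence yields $\mathscr{E}(t)\le\liminf\mathscr{E}_n(t)$, and similarly a lower bound for the damping integral, which together with the limit source integral (controlled by strong convergence) gives the inequality $\le$. For the reverse inequality, the standard route is to test the limit equation against $u_t$ itself; since $u_t$ is only in $L^\infty(0,T;L^2)\cap L^{m+1}(\Omega\times(0,T))$ this is not directly admissible, so one regularizes in time (Steklov averages or convolution with a mollifier in $t$), verifies that the regularized equation admits $u_t$ as a test function, and passes to the limit in the mollification using the monotonicity of $g$ and continuity of $f$. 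The identity \eqref{2.2} then follows, and the energy continuity implies $u\in C([0,T];H_0^1)$ and $u_t\in C([0,T];L^2)$, completing the proof.
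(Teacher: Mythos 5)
The paper does not actually prove this theorem: it explicitly omits the argument, asserting that local existence follows ``by employing the theory of monotone operators and nonlinear semigroups combined with energy methods'' as in \cite{GR2014} (see also \cite{H2021}). Your Faedo--Galerkin construction is therefore a genuinely different route from the one the paper points to. It is the more elementary and self-contained option, at the price of the technical headaches you correctly identify (Minty's trick for the damping, time-mollification to upgrade the energy inequality to the identity \eqref{2.2}); the semigroup route instead realizes the damped problem as a locally Lipschitz perturbation of a maximal monotone operator, obtains strong solutions for truncated data, and inherits the energy identity essentially for free before passing to weak solutions by density.

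Two steps in your outline need repair. First, the absorption step: you absorb $\varepsilon|u_t^n|^{m+1}$ into the damping using $(H_2)$, but $(H_1)$ only makes $\gamma$ bounded and \emph{nonnegative}, so $\int_\Omega\gamma(t)g(u_t^n)u_t^n\,dx$ need not control $\int_\Omega|u_t^n|^{m+1}dx$ at times where $\gamma$ is small or vanishes. This matters precisely when $p>3$, where $\int_\Omega f(u^n)u_t^n\,dx$ cannot be estimated by $\|f(u^n)\|_2\|u_t^n\|_2$ and the $L^{m+1}$ control of $u_t^n$ supplied by the damping is the only thing making the source term finite; for $p\le 3$ you should instead simply drop the nonnegative damping term and close a nonlinear Gronwall estimate in $\mathscr{E}_n$ alone. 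You should either treat these ranges of $p$ separately or note that a positive lower bound on $\gamma$ over $[0,T]$ is being used. Second, the Minty identification as you describe it is circular: to get $\limsup_n\iint\gamma\,g(u_t^n)u_t^n\le\iint\chi u_t$ from the approximate energy identity you need $\liminf_n\mathscr{E}_n(T)\ge\mathscr{E}(T)$ \emph{together with} an energy identity for the limit, which is exactly what you only establish afterwards by mollification. This is resolvable (establish the energy inequality first, run Minty for a.e.\ $T$, or average over $T$), but as written the two steps lean on each other and the order of the argument must be fixed.
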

The following result illustrates that when the damping dominates the source, then the solution is global in time. 
\begin{theorem}[Global existence \uppercase\expandafter{\romannumeral1}]\label{thge1}
In addition to $\mathbf{(H_1)}-\mathbf{(H_4)}$, if $u_0(x)\in L^{p+1}(\Omega)$ and $m\ge p$, then the weak solution of problem \eqref{1.1} is global in time.
\end{theorem}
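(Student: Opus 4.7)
Theorem~\ref{thle} already produces a local weak solution on some $[0,T_0]$. The plan is to invoke a standard continuation principle: the local solution extends to every $[0,T]$ (and hence to $[0,+\infty)$) as soon as one knows that $\mathscr{E}(t)$ together with $\|u(t)\|_{p+1}^{p+1}$ remains finite on $[0,T]$. To keep every manipulation rigorous I would carry the estimates out on Galerkin approximations $u_N$, for which the energy identity \eqref{2.2} holds on their existence interval, and then pass to the limit; the a priori bounds below depend only on structural constants.

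\textbf{Controlling the source via the damping.} Starting from \eqref{2.2}, the task is to dominate the source integral $\int_0^t\!\int_\Omega |u|^{p-1}u\,u_t\,dx\,ds$ by the dissipation plus a Gronwall-tractable remainder. I would split the integrand via Young's inequality with conjugate exponents $\tfrac{m+1}{m}$ and $m{+}1$:
\[
|u|^{p}|u_t|\;\leq\;\varepsilon\,|u_t|^{m+1}\;+\;C_{\varepsilon}\,|u|^{p(m+1)/m}.
\]
The hypothesis $m\geq p$ is decisive here: it forces $p(m+1)/m=p+p/m\leq p+1$, so H\"older's inequality on the bounded domain yields $\int_\Omega|u|^{p(m+1)/m}\,dx\leq C\bigl(1+\|u\|_{p+1}^{p+1}\bigr)$. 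Assumption $(\mathbf{H_2})$ recasts $|u_t|^{m+1}\leq b_1^{-1}g(u_t)u_t$ outside $\{|u_t|\leq 1\}$, so after taking $\varepsilon$ small the $|u_t|$-piece is absorbed into $\int_0^t\!\int_\Omega \gamma\,g(u_t)u_t\,dx\,ds$ on the left of \eqref{2.2}. The outcome is an estimate of the form
\[
\mathscr{E}(t)\;+\;\tfrac{1}{2}\!\int_0^t\!\!\int_\Omega \gamma g(u_t)u_t\,dx\,ds\;\leq\;C_0(1+T)\;+\;C_1\!\int_0^t\!\|u(s)\|_{p+1}^{p+1}\,ds\quad\text{on }[0,T].
\]

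\textbf{Closing with $\|u\|_{p+1}$, and the main obstacle.} To run Gronwall I also need a companion bound on $\|u(t)\|_{p+1}^{p+1}$. Since $\tfrac{1}{p+1}\tfrac{d}{dt}\|u\|_{p+1}^{p+1}=\int_\Omega|u|^{p-1}u\,u_t\,dx$, the same Young's splitting yields $\tfrac{d}{dt}\|u\|_{p+1}^{p+1}\leq C\bigl(\|u\|_{p+1}^{p+1}+\|u_t\|_{m+1}^{m+1}+1\bigr)$; time-integrating and using that $\int_0^t\!\int_\Omega|u_t|^{m+1}\,dx\,ds$ is controlled by the already-bounded dissipation, together with the hypothesis $u_0\in L^{p+1}(\Omega)$ as initial datum, Gronwall's inequality produces a uniform bound on $\|u(t)\|_{p+1}^{p+1}$ on $[0,T]$. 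Re-injecting this into the estimate from the previous step uniformly bounds $\mathscr{E}(t)$ on $[0,T]$, completing the continuation. The principal obstacle is the \emph{double role} of the Young's splitting: it must at once route the $|u_t|$-powers produced by the source into the damping structure of $(\mathbf{H_2})$ \emph{and} leave only $|u|$-powers no larger than $p+1$, which is the quantity we are actually trying to control. The hypothesis $m\geq p$ is precisely what reconciles these two demands in a single step, and is the technical embodiment of the slogan ``damping dominates source.''
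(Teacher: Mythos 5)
The paper does not actually supply a proof of Theorem \ref{thge1}: it states that the result ``follows from a standard continuation argument from ODE theory'' and refers the reader to \cite{GR2014} and \cite{H2021}. Your proposal is precisely that standard argument (the Georgiev--Todorova-type scheme: Young's inequality with exponents $\tfrac{m+1}{m}$ and $m+1$ on the source, the observation that $m\ge p$ forces $p\tfrac{m+1}{m}\le p+1$, absorption of the $|u_t|^{m+1}$-piece into the dissipation, and a Gronwall closure on $\mathscr{E}(t)+\tfrac{1}{p+1}\|u\|_{p+1}^{p+1}$), so in spirit it matches exactly what the author intends. One stylistic point: rather than bounding $\|u\|_{p+1}^{p+1}$ ``after'' bounding $\mathscr{E}$, it is cleaner to add the two differential inequalities and run a single Gronwall on the sum, since each bound feeds into the other; as written your ``re-injecting'' step is mildly circular but trivially repaired.

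There is, however, one step you should not pass over silently: the absorption of $\varepsilon\int_0^t\!\int_\Omega|u_t|^{m+1}\,dx\,ds$ into $\int_0^t\!\int_\Omega\gamma(t)\,g(u_t)u_t\,dx\,ds$ requires pointwise $\varepsilon|u_t|^{m+1}\le\tfrac12\gamma(t)\,b_1|u_t|^{m+1}$ on $\{|u_t|>1\}$, i.e.\ a positive lower bound for $\gamma$ (at least on each compact time interval). Assumption $(\mathbf{H_1})$ only makes $\gamma$ bounded and \emph{nonnegative}, so the step fails if $\gamma$ vanishes or degenerates to $0$; indeed for $\gamma\equiv0$ the conclusion itself is false for large data (the undamped focusing wave equation blows up). This is arguably a gap in the theorem's hypotheses as much as in your write-up, but a complete proof must state explicitly where positivity of $\gamma$ is used, e.g.\ by working on $[0,T]$ with $\gamma_T:=\min_{[0,T]}\gamma>0$ and letting the constants depend on $T$, which still suffices for the continuation argument.
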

Theorem \ref{thle} can be proved directly by employing the theory of monotone operators and nonlinear semigroups combined with energy methods. Theorem \ref{thge1} also follows from a standard continuation argument from ODE theory.  The interested readers can follow from line to line as shown in  \cite{GR2014}(see also a recent paper \cite{H2021}) with slight differences to achieve it. The proof of Theorem \ref{thle} and Theorem \ref{thge1} is not the point in question, so we omit it.

For $u\in L^{p+1}(\Omega)$, define the total energy $E(t)$ by
\begin{equation}
\label{2.4}
E(t):=\mathscr{E}(t)-\frac{1}{p+1}\|u\|^{p+1}=\frac12\|u_t(t)\|_2^2+\frac12a(u(t),u(t))-\frac{1}{p+1}\|u\|_{p+1}^{p+1}.
\end{equation}
Then \eqref{2.2} can be transferred as 
\begin{equation}
\label{411}
E(t)+\int_0^t\int_\Omega \gamma(t)g(u_t)u_tdxds=E(0).
\end{equation}
Further, we obtain the total energy $E(t)$ is monotone decreasing in time, and 
\begin{equation}
\label{4111}
E'(t)=-\int_\Omega \gamma(t)g(u_t)u_tdx.
\end{equation}
\begin{theorem}[Global existence \uppercase\expandafter{\romannumeral2}]\label{thge2}
Let $1<p\le 5$ and $\mathbf{(H_1)}-\mathbf{(H_4)}$ hold. Assume \[0<E(0)<\Big(\frac12-\frac{1}{p+1}\Big)\Big(\frac{\omega^{\frac{p+1}{2}}}{M^{p+1}}\Big)^{\frac{2}{p-1}},\quad a(u_0,u_0)<\Big(\frac{\omega^{\frac{p+1}{2}}}{M^{p+1}}\Big)^{\frac{2}{p-1}},\] then the weak solution of problem \eqref{1.1} is global.
\end{theorem}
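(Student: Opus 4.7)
\medskip
\noindent\textbf{Proof proposal for Theorem \ref{thge2}.}
The plan is a standard potential--well type argument adapted to the variable--coefficient bilinear form $a(\cdot,\cdot)$. I would first introduce the abbreviation
\[
\alpha:=\Big(\frac{\omega^{(p+1)/2}}{M^{p+1}}\Big)^{\!2/(p-1)},\qquad d:=\Big(\tfrac12-\tfrac{1}{p+1}\Big)\alpha,
\]
so that the hypotheses read $0<E(0)<d$ and $a(u_0,u_0)<\alpha$. Combining the Sobolev embedding $\|u\|_{p+1}\le M\|\nabla u\|_2$ with \eqref{1.3} in the form $\|\nabla u\|_2^2\le \omega^{-1}a(u,u)$ gives
\[
\tfrac{1}{p+1}\|u\|_{p+1}^{p+1}\le \frac{M^{p+1}}{(p+1)\,\omega^{(p+1)/2}}\,a(u,u)^{(p+1)/2}.
\]
Using this in the definition \eqref{2.4} of the total energy, one obtains
\[
E(t)\ge \tfrac12\|u_t(t)\|_2^2+j\!\left(a(u(t),u(t))\right),\qquad j(\lambda):=\tfrac12\lambda-\frac{M^{p+1}}{(p+1)\omega^{(p+1)/2}}\lambda^{(p+1)/2}.
\]
A direct computation shows $j$ is increasing on $[0,\alpha]$, attains its maximum $j(\alpha)=d$ at $\lambda=\alpha$, and decreases thereafter.

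Next I would exploit the energy identity \eqref{411}. Since $\gamma(t)\ge 0$ by $\mathbf{(H_1)}$ and $g(s)s\ge 0$ by monotonicity of $g$ together with $g(0)=0$ from $\mathbf{(H_2)}$, the dissipation term is nonnegative, so $E(t)\le E(0)<d$ throughout the (local) existence interval. I would then prove the invariance of the set $\{v\in H_0^1(\Omega): a(v,v)<\alpha\}$ along the trajectory by a continuity argument. Assume for contradiction that there exists a first time $t_0>0$ with $a(u(t_0),u(t_0))=\alpha$; since $u\in C([0,T];H_0^1(\Omega))$ and $a(\cdot,\cdot)$ is continuous on $H_0^1\times H_0^1$, the map $t\mapsto a(u(t),u(t))$ is continuous, so such a $t_0$ would give, via the lower bound for $E$ above,
\[
E(t_0)\;\ge\; j(a(u(t_0),u(t_0)))\;=\;j(\alpha)\;=\;d,
\]
contradicting $E(t_0)\le E(0)<d$. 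Hence $a(u(t),u(t))<\alpha$ for every $t$ in the existence interval.

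Finally, from this uniform bound together with $E(t)\le E(0)$ I would derive the two a priori estimates
\[
\|\nabla u(t)\|_2^2\le \omega^{-1}\alpha,\qquad
\tfrac12\|u_t(t)\|_2^2\;\le\;E(0)+\frac{M^{p+1}\,\alpha^{(p+1)/2}}{(p+1)\omega^{(p+1)/2}},
\]
which are uniform on the maximal existence interval. The standard continuation principle used for Theorem \ref{thle} then shows that no blow--up of the $H_0^1\times L^2$--norm can occur in finite time, so the local weak solution extends to all $t\ge 0$. I do not anticipate a genuine technical obstacle here: the delicate point is merely verifying that the depth $d$ in the assumption really equals the maximum of the auxiliary function $j$ (so that the barrier argument is sharp) and checking that the continuity of $t\mapsto a(u(t),u(t))$ coming from $u\in C([0,T];H_0^1(\Omega))$ is enough to run the ``first exit time'' argument; everything else is algebraic manipulation of the Sobolev inequality and the energy identity \eqref{411}.
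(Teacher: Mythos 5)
Your proposal is correct and follows essentially the same potential--well barrier argument as the paper: the auxiliary function $j$ is exactly the paper's $\mathcal{F}$, with $\alpha=s_1$ and $d=\mathcal{F}_1$, and the first--exit--time contradiction is the same mechanism as the paper's Lemma \ref{lem1}. The only (harmless) difference is that the paper pins the trajectory below the strictly smaller level $s_2<s_1$ defined by $\mathcal{F}(s_2)=E(0)$, which it later reuses to get the quantitative constant $\mathcal{M}<\tfrac{2(p+1)}{p-1}$ in the decay estimates, whereas your bound $a(u(t),u(t))<\alpha$ is enough for global existence itself.
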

We need to impose additional conditions to discuss the energy decay rates.

$\mathbf{(H_5)}$ $\gamma:\mathbb{R}^+\to \mathbb{R}^+$ a nonincreasing function of class $C^1$
and $\int_0^\infty \gamma(t)dt=\infty.$

$\mathbf{(H_6)}$  There exists a strictly increasing and odd function $h\in C^1[1,1]$  such that 
\[|h(s)s|\le g(s)s\le |h^{-1}(s)s|,\quad\text{ where }|s|\le 1,\]
where $h^{-1}$ is the inverse function of $h$.
\begin{theorem}[Energy decay rates \uppercase\expandafter{\romannumeral1}]\label{thedr}
In addition to all conditions of Theorem $\ref{thge2}$, $\mathbf{(H_5)}$ and $\mathbf{(H_6)}$, we assume $1\le m\le 5$. Then  for all $t\ge 0$
\begin{enumerate}[$(1)$]
  \item if $h(s)$ is linear,
    \begin{equation}
\label{a413}
E(t)\le E(0)e^{1-C\int_0^t\gamma(s)ds}.
\end{equation}
  \item if $h(s)$  has polynomial growth, 
  \begin{equation}
\label{b413}
E(t)\le  CE(0)\left(\frac{1}{1+\int_0^t\gamma(s)ds}\right)^{\frac{2}{m-1}} \text{with }m>1.
\end{equation}
\end{enumerate}
\end{theorem}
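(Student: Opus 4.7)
I would follow the weighted--energy--integral technique of P.~Martinez~\cite{M2000}, adapted to the variable--coefficient operator $\mathcal A$ and to the source $f(u)=|u|^{p-1}u$. First, using the smallness hypotheses of Theorem~\ref{thge2}, I would establish a potential--well invariance: a continuity argument combined with \eqref{1.3} and $\|u\|_{p+1}\le M\|\nabla u\|_2$ shows that $a(u(t),u(t))$ stays strictly below $(\omega^{(p+1)/2}/M^{p+1})^{2/(p-1)}$ for all $t\ge0$, from which
\[\|u(t)\|_{p+1}^{p+1}\le\theta\,a(u(t),u(t)),\qquad E(t)\ge C_0\,\mathscr E(t),\]
for some $\theta\in(0,1)$ and $C_0>0$. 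These two coercivity bounds are the device that later converts $E$--estimates into $\mathscr E$--estimates and keeps the source term subordinate to the elliptic form.

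Next I would fix $0\le S<T<\infty$ and use $\varphi(t):=E^{q}(t)u(t)$ as a test function in \eqref{2.1}, with $q=0$ in case~(1) and $q=(m-1)/2$ in case~(2). Integrating by parts in $t$ and using $(H_5)$ together with $E'\le0$ gives the schematic identity
\begin{align*}
\int_S^T E^{q}\gamma\bigl(a(u,u)-\|u\|_{p+1}^{p+1}-\|u_t\|_2^2\bigr)\,dt
&= -\bigl[E^{q}\gamma(u_t,u)\bigr]_S^T+(\text{lower order})\\
&\quad-\int_S^T\!\!\int_\Omega E^{q}\gamma\,g(u_t)\,u\,dx\,dt.
\end{align*}
Using $2E=\|u_t\|_2^2+a(u,u)-\tfrac{2}{p+1}\|u\|_{p+1}^{p+1}$ together with $\|u\|_{p+1}^{p+1}\le\theta\,a(u,u)$, the left--hand side is bounded below by $c\!\int_S^T E^{q+1}\gamma\,dt-C\!\int_S^T E^{q}\gamma\|u_t\|_2^2\,dt$, and the boundary term is $O(E^{q+1}(S))$ by Cauchy--Schwarz and the coercivity above.

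The main obstacle is the damping integral, which I would handle by splitting $\Omega=\Omega_1\cup\Omega_2$ with $\Omega_1:=\{|u_t|\le1\}$ and $\Omega_2:=\{|u_t|>1\}$. On $\Omega_2$, $(H_2)$ gives $|g(u_t)|\le b_2|u_t|^{m}$; H\"older with exponents $(\tfrac{m+1}{m},m+1)$, the Sobolev embeddings of $H_0^1$ permitted by $(H_3)$ and $m\le5$, and the dissipation identity \eqref{4111} yield a bound of the form $\varepsilon\!\int_S^T E^{q+1}\gamma\,dt+C_\varepsilon\bigl(E^{q+1}(S)-E^{q+1}(T)\bigr)$. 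On $\Omega_1$, $(H_6)$ gives $|g(u_t)|\le|h^{-1}(u_t)|$; in case~(1) this is simply $|g(u_t)|\le C|u_t|$, while in case~(2) a Jensen--type inequality in the spirit of Lasiecka--Tataru converts the $L^1$--smallness of $\int_\Omega g(u_t)u_t\,dx$ into the same type of bound. The residual term $\int_S^T E^{q}\gamma\|u_t\|_2^2\,dt$ is absorbed by the same splitting, using $\|u_t\|_2^2\le -CE'/\gamma$ on $\Omega_1$ in case~(1).

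Collecting everything and absorbing the $\varepsilon$--terms on the left, I arrive at the key weighted inequality
\[\int_S^T E^{q+1}(t)\gamma(t)\,dt\le C\,E^{q+1}(S),\qquad 0\le S<T<\infty.\]
In case~(1) ($q=0$) Komornik's lemma yields $E(t)\le E(0)\exp\!\bigl(1-C^{-1}\!\int_0^t\gamma(s)\,ds\bigr)$, proving \eqref{a413}. In case~(2) ($q=(m-1)/2$) Martinez's polynomial interpolation lemma yields $E(t)\le CE(0)\bigl(1+\!\int_0^t\gamma(s)\,ds\bigr)^{-1/q}=CE(0)\bigl(1+\!\int_0^t\gamma\bigr)^{-2/(m-1)}$, proving \eqref{b413}. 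Reconciling the polynomial--at--infinity behaviour of $g$ from $(H_2)$ with its $h$--controlled behaviour near zero from $(H_6)$, while preventing the source $|u|^{p-1}u$ from overwhelming the dissipation, is the delicate point; it is here that the potential--well coercivity of the first paragraph and the Sobolev exponent provided by $(H_3)$ are indispensable.
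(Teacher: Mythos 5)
Your proposal is correct and follows essentially the same route as the paper: potential-well invariance to make the source term absorbable, the weighted multiplier $E^{q}(t)\gamma(t)u(t)$ with $q=0$ (linear case) and $q=\tfrac{m-1}{2}$ (polynomial case), the splitting of the damping integral over $\{|u_t|\le 1\}$ and $\{|u_t|>1\}$ via $\mathbf{(H_6)}$ and $\mathbf{(H_2)}$, and the concluding integral-inequality lemma of Komornik--Martinez type. The only cosmetic slip is that you state the test function as $E^{q}(t)u(t)$ while your subsequent identity (and the target inequality $\int_S^T E^{q+1}\gamma\,dt\le CE^{q+1}(S)$) requires the weight $\gamma(t)$ to be included in the multiplier, exactly as in the paper's choice $\phi'(t)=\gamma(t)$.
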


When $h(s)$ does not necessarily have polynomial growth, the energy decay rates still can be obtained if we replace $\mathbf{(H_5)}$ by the following

$\mathbf{(H_5')}$ $\gamma(t)\ge \gamma_0>0$, where $\gamma_0$ is constant.
\begin{theorem}[Energy decay rates \uppercase\expandafter{\romannumeral2}]\label{thedr1}
In addition to all conditions of Theorem $\ref{thge2}$, $\mathbf{(H_5')}$ and $\mathbf{(H_6)}$, we assume $1\le m\le 5$. Then for all $t\ge 1$
 \begin{equation}
\label{190}
E(t)\le CE(0)\left(H^{-1}\Big(\frac1t\Big)\right)^2.
\end{equation}
 Here $H(s):=h(s)s$.
 \end{theorem}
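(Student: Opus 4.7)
The plan is to adapt the weighted-multiplier argument behind Theorem \ref{thedr} to the setting of non-polynomial damping near zero, in the spirit of the Lasiecka--Tataru observability approach. Because $\mathbf{(H_5')}$ replaces the integral condition $\int \gamma = \infty$ by a uniform lower bound $\gamma(t) \ge \gamma_0$, the eventual decay rate will not involve $\int_0^t \gamma(s)\, ds$ but rather the modulus of continuity of $g$ at zero, encoded by $H^{-1}$.

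First, I would combine the energy identity \eqref{411} with the multiplier $\phi = u$ tested in \eqref{2.1} to produce, after integration over $[S,T]$, an inequality of the form
$$\int_S^T E(t)\, dt \le C\bigl(E(S) + E(T)\bigr) + C \int_S^T \!\! \int_\Omega \bigl( |u_t|^2 + \gamma(t) g(u_t)\, u \bigr) dx\, dt,$$
with the $\|u\|_{p+1}^{p+1}$ contribution absorbed via the potential-well gap inherited from Theorem \ref{thge2}, namely $\tfrac{1}{2}a(u,u) - \tfrac{1}{p+1}\|u\|_{p+1}^{p+1} \ge \eta\, E(t)$ for some $\eta > 0$ along the trajectory. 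The cross term $\int \gamma g(u_t) u$ is split by Young's inequality so that the $u$-factor is absorbed by the potential-corrected gradient term while the $g(u_t)$-factor is returned to the dissipation.

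The heart of the proof is then controlling the remaining $\int |u_t|^2$. I would partition $\Omega$ pointwise into $\Omega_1(t) = \{|u_t| \le 1\}$ and $\Omega_2(t) = \{|u_t| > 1\}$. On $\Omega_2$, assumption $\mathbf{(H_2)}$ with $1 \le m \le 5$ and the embedding $H_0^1 \hookrightarrow L^6$ let the contribution be absorbed into $-E'(t)/\gamma_0$ together with an $\varepsilon$-fraction of $\int E$. On $\Omega_1$, hypothesis $\mathbf{(H_6)}$ gives the pointwise bound $|u_t|^2 \le H^{-1}\!\bigl(g(u_t) u_t\bigr)$ since $H$ is strictly increasing with $H(0) = 0$; applying Jensen's inequality to the concave $H^{-1}$ and using $\gamma(t) \ge \gamma_0 > 0$ yields
$$\int_{\Omega_1} |u_t|^2\, dx \le |\Omega|\, H^{-1}\!\left(\frac{1}{\gamma_0 |\Omega|} \int_\Omega \gamma(t) g(u_t) u_t\, dx\right) \le C\, H^{-1}\!\left(-\frac{E'(t)}{C\gamma_0}\right).$$

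Combining everything produces the nonlinear integral recurrence $\int_S^T E(t)\, dt \le C E(S) + C(T-S)\, H^{-1}\!\bigl(C(E(S) - E(T))/(T-S)\bigr)$; since $E$ is nonincreasing, $(T-S)E(T) \le \int_S^T E$, and a direct ODE-comparison lemma of Lasiecka--Tataru type (or Martinez's approach from \cite{M2000}) extracts $E(t) \le C E(0)\bigl(H^{-1}(1/t)\bigr)^2$ for $t \ge 1$. The exponent $2$ appears because it is $|u_t|^2$, not $|u_t|$, that $H^{-1}$ controls after the Jensen step. The main obstacle I anticipate is the simultaneous handling of the superlinear source $|u|^{p-1}u$ and the non-polynomial damping in the cross term $\int \gamma g(u_t) u$: the mismatched exponents on $u$ versus $u_t$ force a careful Young splitting whose small parameter must be chosen compatibly with both the potential-well coercivity and with the $\gamma_0$ factor pulled outside $H^{-1}$. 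Balancing these constants so that the final recurrence closes without swallowing the $H^{-1}$ bound is the delicate bookkeeping of the proof.
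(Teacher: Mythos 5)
Your outline replaces the paper's mechanism with a Lasiecka--Tataru-type observability argument, and while the general flavor (multiplier $u$, splitting $\Omega$ by the size of $|u_t|$, concavity/Jensen, then an ODE comparison) is reasonable, the key quantitative step does not reach the stated rate. On $\{|u_t|\le 1\}$ you use $|u_t|^2\le H^{-1}\bigl(g(u_t)u_t\bigr)$ and Jensen for $H^{-1}$, which leads (after the $(T-S)E(T)\le\int_S^T E$ step with $T-S$ of order one) to a discrete recurrence of the form $H\bigl(E(S+T_0)/C\bigr)\le c\bigl(E(S)-E(S+T_0)\bigr)$, i.e.\ the comparison ODE $y'\sim -H(y)$. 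In the model case $h(s)=b_5|s|^{m-1}s$ this yields only $E(t)\lesssim t^{-1/m}$, whereas the theorem claims $E(t)\le CE(0)\bigl(H^{-1}(1/t)\bigr)^2\sim t^{-2/(m+1)}$, which is strictly stronger for $m>1$. To close this gap along your route you would need the sharper pointwise bound $|u_t|^2\le\bigl(H^{-1}(g(u_t)u_t)\bigr)^2$ together with concavity of $\tau\mapsto\bigl(H^{-1}(\tau)\bigr)^2$ for the Jensen step; neither that concavity nor even the concavity of $H^{-1}$ itself follows from $\mathbf{(H_6)}$, which only makes $h$ a strictly increasing odd $C^1$ function. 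A second unaddressed point is the term $\int|g(u_t)|^2$ on the small-velocity set produced by your Young splitting of $\int\gamma\, g(u_t)u$: on $\{|u_t|\le 1\}$ one only has $|g(u_t)|\le|h^{-1}(u_t)|$, which is not controlled by the dissipation $g(u_t)u_t$ without further structure.

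The paper avoids both difficulties by a different device (Martinez's weight-function method): it constructs an explicit concave weight $\phi=\tilde\psi^{-1}$ with $\tilde\psi(t)=1+\int_1^t\frac{ds}{h(1/s)}$, runs the weighted identity $\int_S^T E^{\beta+1}\phi'\,dt$ with $\beta=1$, and partitions $\Omega$ by comparing $|u_t|$ both with $\chi(t)=1/\phi(t)$ (to handle $\int|u_t|^2$) and with $\phi'(t)$ (to handle $\int|g(u_t)|^2$, using that $\int_1^\infty\phi'(t)|h^{-1}(\phi'(t))|^2\,dt$ converges). Lemma \ref{lem4.1} with $\sigma=\sigma'=1$ then gives $E(t)\le CE(0)\phi(t)^{-2}$, and the estimate $1/\phi(t)\le H^{-1}(1/t)$ converts this into \eqref{190}; the exponent $2$ comes from the choice $\beta=1$ in the weighted inequality, not from a Jensen step. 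As written, your proposal therefore has a genuine gap: the recurrence it produces is too weak by a full power in the model polynomial case, and the concavity hypotheses it silently relies on are not available.
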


\begin{remark}
In the proof of Theorem \ref{thedr}, we only discuss the energy decay rates for $m\le 5$, that is, the 
nonlinear damping is \emph{subcritical} and \emph{critical}. The restraint results from the embedding theorem $H_0^1(\Omega) \hookrightarrow L^{m+1}(\Omega)$. A recent paper \cite{HT2022} investigated the energy decay estimates for the automous wave equation with \emph{supercritical} nonlinear
damping in the absence of the driving source. Inspired by the paper \cite{HT2022}, we reasonably conjecture that  if there exists $\kappa(t)$ satisfying some suitable conditions, then
\begin{equation*}
E(t)\le  CE(0)\left(\frac{1}{1+\int_0^t\kappa(t)\gamma(s)ds}\right)^{\frac{2}{m-1}},
\end{equation*}
for $m>5$, that is the supercritical nonlinear damping.
\end{remark}

\begin{remark}
In this paper, we discuss the  variable-coefficient wave equation with nonlinear time-dependent damping and nonlinear source for standard growth conditions. However, the energy decay rates for wave systems with nonstandard growth condition can be of equal importance. By following this paper, it is possible to discuss the energy decay rates to the initial-boundary value problem
 \begin{equation*}
\begin{cases}
u_{tt}-\Delta u+\gamma(t)|u_t|^{m(x)-2}u_t=|u|^{p(x)-2}u \quad &\text{ in }\Omega \times (0,T),\\
 u(x,t)=0\quad&\text{ on } \partial\Omega \times (0,T),\\
      u(x,0)=u_0(x),~u_t(x,0)=u_1(x)\quad&\text{ in }\Omega.
      \end{cases}
\end{equation*}
\end{remark}

\begin{remark}
The rest field $u(t,x)=0$ will be called \emph{asymptotically stable in the 
mean}, or simply \emph{asymptotically stable}, if and only if 
\[\lim_{t\to \infty} E(t)=0\text{ for all solutions } u(t):=u(t,x) \text{ of problem }\eqref{1.1}.\]
This concept was proposed first  by P. Pucci and J. Serrin \cite{2PS1996}. Obviously, based on Theorem \ref{thge1} or Theorem \ref{thge2}, by Lemma \ref{lem2}, we obtain $\lim_{t\to \infty} E(t)=0.$ Hence,  the rest field $u(x,t)=0$ is asymptotically stable.
\end{remark}

\section{Proof of Theorem \ref{thge2}}\label{sec3}

Let us introduce a function $\mathcal{F}$ as follows
\begin{equation}
\label{3.1}
\mathcal{F}(s)=\frac12s-\frac{M^{p+1}}{(p+1)\omega^{\frac{p+1}{2}}}s^{\frac{p+1}{2}}.
\end{equation}
By a direct computation, the function $\mathcal{F}$ satisfies  that
 \begin{enumerate}
  \item $\mathcal{F}(0)=0$;
  \item $\lim_{s\to +\infty}\mathcal{F}(s)=-\infty$;
  \item $\mathcal{F}$ is strictly increasing in $(0,s_1)$,  and is strictly decreasing in $(s_1,+\infty)$;
  \item $\mathcal{F}$ has a maximum at $s_1$ with the maximum value $\mathcal{F}_1$. Here 
  \[s_1=\Big(\frac{\omega^{\frac{p+1}{2}}}{M^{p+1}}\Big)^{\frac{2}{p-1}},\quad \mathcal{F}_1=\Big(\frac12-\frac{1}{p+1}\Big)s_1.\]
\end{enumerate}

\begin{lemma}\label{lem1}
If $u(t)$ is a solution for problem $(\ref{1.1})$ and
$E(0)<\mathcal{F}_1,$ $a(u_0,u_0)<s_1,$ then there exists a positive constant $s_2$ satisfying $0<s_2<s_1$ such that
\begin{equation}\label{2}
a(u(t),u(t))\le s_2\quad\text{ for all } t\ge0.
\end{equation}
\end{lemma}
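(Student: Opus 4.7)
The plan is a classical potential-well argument exploiting the shape of the function $\mathcal{F}$ already constructed. First, I would show that $E(t)$ dominates $\mathcal{F}(a(u(t),u(t)))$ pointwise in time. Combining the ellipticity bound $\omega \|\nabla u\|_2^2 \le a(u,u)$ with the optimal Sobolev embedding $\|u\|_{p+1}\le M\|\nabla u\|_2$ yields
\begin{equation*}
\frac{1}{p+1}\|u\|_{p+1}^{p+1} \le \frac{M^{p+1}}{(p+1)\,\omega^{(p+1)/2}}\, a(u,u)^{(p+1)/2},
\end{equation*}
so that the definition \eqref{2.4} of the total energy gives $E(t) \ge \tfrac12\|u_t\|_2^2 + \mathcal{F}(a(u(t),u(t))) \ge \mathcal{F}(a(u(t),u(t)))$.

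Next I would invoke the energy identity \eqref{411}, which forces $E(t)\le E(0)$, to obtain $\mathcal{F}(a(u(t),u(t))) \le E(0) < \mathcal{F}_1$ on the existence interval. The weak-solution regularity $u\in C([0,T];H_0^1(\Omega))$ makes $t \mapsto a(u(t),u(t))$ continuous. Since this map starts at the value $a(u_0,u_0)<s_1$, a standard continuity/contradiction step shows it cannot cross the barrier at $s_1$: if $a(u(t^*),u(t^*))=s_1$ for some first time $t^*$, then $\mathcal{F}(a(u(t^*),u(t^*)))=\mathcal{F}_1$, contradicting the strict inequality $\mathcal{F}(a(u(t),u(t))) \le E(0) < \mathcal{F}_1$. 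Thus $a(u(t),u(t))\in[0,s_1)$ throughout.

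To finish, I would choose $s_2\in(0,s_1)$ to be the unique point where $\mathcal{F}(s_2)=E(0)$; its existence and uniqueness follow from the strict monotonicity of $\mathcal{F}$ on $(0,s_1)$ together with $\mathcal{F}(0)=0\le E(0)<\mathcal{F}_1=\mathcal{F}(s_1)$. Applying the already established inequality $\mathcal{F}(a(u(t),u(t)))\le E(0)=\mathcal{F}(s_2)$ and the monotonicity of $\mathcal{F}$ on the left branch $[0,s_1)$ then yields $a(u(t),u(t))\le s_2$ as desired.

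The only delicate step is the continuity/no-jump argument in the second paragraph; the rest is bookkeeping with the elementary function $\mathcal{F}$. The same argument also promotes the local solution to a global one, because the bound $a(u(t),u(t))\le s_2$ together with $E(t)\le E(0)$ yields an a priori bound on $\|u_t\|_2^2$ and $\|\nabla u\|_2^2$, preventing blow-up in finite time; this is the justification for stating the conclusion for all $t\ge 0$ rather than only on the local interval.
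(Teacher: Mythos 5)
Your proposal is correct and follows essentially the same potential-well argument as the paper: both establish $\mathcal{F}(a(u(t),u(t)))\le E(t)\le E(0)<\mathcal{F}_1$, define $s_2$ by $\mathcal{F}(s_2)=E(0)$, and use continuity of $t\mapsto a(u(t),u(t))$ with a contradiction step to confine the trajectory to the left branch. The only (cosmetic) difference is that you first rule out crossing the barrier at $s_1$ and then invoke monotonicity to descend to $s_2$, whereas the paper runs the contradiction directly against $s_2$.
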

\begin{proof}
Using \eqref{1.3} and \eqref{2.1}, the embedding theorem $H_0^1(\Omega) \hookrightarrow L^{p+1}(\Omega)$, we obtain 
\begin{equation}
\label{130}
\begin{split}
E(t)&\ge \frac12a(u(t),u(t))-\frac{1}{p+1}\|u\|_{p+1}^{p+1}\ge\frac12a(u(t),u(t))-\frac{M^{p+1}}{p+1}\|\nabla u\|_2^{p+1}\\
&\ge\frac12a(u(t),u(t))-\frac{M^{p+1}}{(p+1)\omega^{\frac{p+1}{2}}}[a(u(t),u(t))]^{\frac{p+1}{2}}:=\mathcal{F}(a(u(t),u(t))).
\end{split}
\end{equation}

Since $E(0)<\mathcal{F}_1$, there exists a $s_2<s_1$  such that $\mathcal{F}(s_2)=E(0)$. It follows from \eqref{130} that $\mathcal{F}(a(u_0,u_0))\leq E(0)=\mathcal{F}(s_2)$, which implies $a(u_0,u_0)\le s_2$ due to the given condition $a(u_0,u_0)<s_1$. To complete the proof of \eqref{2}, we suppose by contradiction that for some $t^0>0$, $a(u(t^0),u(t^0))>s_2.$ The continuity of $a(u(t),u(t))$ illustrates that we may choose $t^1$
such that $s_1>a(u(t^1),u(t^1))>s_2$, then we have
$E(0)=\mathcal{F}(s_2)<\mathcal{F}(a(u(t^1),u(t^1)))\leq E(t^1).$
This is a contradiction for $E(t)$ is nonincreasing.
\end{proof}

\begin{lemma}\label{lem2}
Under all the conditions of Lemma $\ref{lem1},$  for all $t\ge 0,$ the following holds
\begin{equation}
\label{5}
0\le \mathscr{E}(t)\le C_0E(t)\le C_0E(0).
\end{equation}
\end{lemma}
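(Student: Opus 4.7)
The plan is to establish the three inequalities in \eqref{5} from left to right, with the middle inequality being the main content.

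The left inequality $0 \le \mathscr{E}(t)$ is immediate from the definition \eqref{2.3}, since $\|u_t\|_2^2 \ge 0$ and $a(u,u) \ge \omega\|\nabla u\|_2^2 \ge 0$ by the ellipticity condition \eqref{1.3}. The right inequality $E(t) \le E(0)$ is a direct consequence of the energy identity \eqref{411}: because $\gamma(t)\ge 0$ by $\mathbf{(H_1)}$ and $g$ is monotone with $g(0)=0$ by $\mathbf{(H_2)}$ (so $g(s)s\ge 0$), the time derivative \eqref{4111} satisfies $E'(t)\le 0$.

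The central step is $\mathscr{E}(t)\le C_0 E(t)$, which requires absorbing the superquadratic term $\frac{1}{p+1}\|u\|_{p+1}^{p+1}$ into $\mathscr{E}(t)$. The plan is to use the Sobolev embedding $H_0^1(\Omega)\hookrightarrow L^{p+1}(\Omega)$ together with \eqref{1.3} exactly as in the derivation of \eqref{130}, to bound
\[
\frac{1}{p+1}\|u\|_{p+1}^{p+1}\le \frac{M^{p+1}}{(p+1)\omega^{(p+1)/2}}\bigl[a(u,u)\bigr]^{(p+1)/2}
=\frac{1}{p+1}\Bigl[\tfrac{a(u,u)}{s_1}\Bigr]^{(p-1)/2} a(u,u),
\]
where I have used the definition of $s_1$ from Lemma \ref{lem1}. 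By Lemma \ref{lem1}, $a(u,u)\le s_2<s_1$, so the factor $\theta:=(s_2/s_1)^{(p-1)/2}$ satisfies $0<\theta<1$. Consequently
\[
\frac{1}{p+1}\|u\|_{p+1}^{p+1}\le \frac{\theta}{p+1}\,a(u,u)\le \frac{2\theta}{p+1}\cdot\frac12 a(u,u)\le \frac{2\theta}{p+1}\,\mathscr{E}(t).
\]
Plugging into \eqref{2.4} yields
\[
E(t)\ge \Bigl(1-\frac{2\theta}{p+1}\Bigr)\mathscr{E}(t),
\]
and since $p\ge 1$ and $\theta<1$ give $p+1-2\theta>0$, one can take $C_0:=\frac{p+1}{p+1-2\theta}$ and conclude $\mathscr{E}(t)\le C_0 E(t)$.

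I do not anticipate a genuinely hard obstacle here: everything reduces to the single observation that the strict inequality $a(u(t),u(t))\le s_2<s_1$ from Lemma \ref{lem1} gives a uniform subcritical absorption constant $\theta<1$. The only point requiring a small amount of care is checking that $C_0$ is a finite positive constant even in the borderline case $p=1$, but this is handled by the strict inequality $\theta<1$, which ensures $p+1-2\theta>0$ for every admissible $p\in[1,6)$.
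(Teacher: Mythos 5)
Your proof is correct and follows essentially the same route as the paper: both use the Sobolev embedding, the uniform ellipticity \eqref{1.3}, and the bound $a(u(t),u(t))\le s_2<s_1$ from Lemma \ref{lem1} to absorb $\frac{1}{p+1}\|u\|_{p+1}^{p+1}$ with a subcritical constant (the paper absorbs this term into itself via $a(u,u)\le 2E(t)+\frac{2}{p+1}\|u\|_{p+1}^{p+1}$, while you absorb it into $\mathscr{E}(t)$ --- an equivalent bookkeeping). One small quibble: your closing remark about the borderline case $p=1$ is off, since there $\theta=(s_2/s_1)^{(p-1)/2}=(s_2/s_1)^0=1$ rather than $\theta<1$; but this is moot because $s_1$ and $\mathcal{F}_1$ are only defined for $p>1$, so the hypotheses of Lemma \ref{lem1} (inherited from Theorem \ref{thge2}, which assumes $1<p\le 5$) already exclude $p=1$.
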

\begin{proof}
Similar to \eqref{130}, and then using \eqref{2} and $\eqref{2.4}$, then 
\begin{equation*}
\begin{split}
\frac{1}{p+1}\|u\|_{p+1}^{p+1}&\le  \frac{M^{p+1}}{p+1}\|\nabla u\|_2^{p+1}\le\frac{M^{p+1}}{(p+1)\omega^{\frac{p+1}{2}}}[a(u(t),u(t))]^{\frac{p+1}{2}}\\
&=\frac{M^{p+1}}{(p+1)\omega^{\frac{p+1}{2}}}[a(u(t),u(t))]^{\frac{p-1}{2}}a(u(t),u(t))\\
&\le \frac{M^{p+1}}{(p+1)\omega^{\frac{p+1}{2}}}s_2^{\frac{p-1}{2}}\Big(2E(t)+\frac{2}{p+1}\|u\|_{p+1}^{p+1}\Big)
,
\end{split}
\end{equation*}
which indicates 
\begin{equation}
\label{4}
\|u\|_{p+1}^{p+1}\le \mathcal{M}E(t)\le \mathcal{M}E(0),
\end{equation}
by recalling $E(t)$ is monotone decreasing. Here
\[\mathcal{M}:=\frac{ \frac{2M^{p+1}}{\omega^{\frac{p+1}{2}}}s_2^{\frac{p-1}{2}}}{1-\frac{2M^{p+1}}{(p+1)\omega^{\frac{p+1}{2}}}s_2^{\frac{p-1}{2}}}<\frac{ \frac{2M^{p+1}}{\omega^{\frac{p+1}{2}}}s_1^{\frac{p-1}{2}}}{1-\frac{2M^{p+1}}{(p+1)\omega^{\frac{p+1}{2}}}s_1^{\frac{p-1}{2}}}=\frac{2(p+1)}{p-1}.\]
Combining \eqref{4} with \eqref{2.4}, we directly obtain 
\[\mathscr{E}(t)=E(t)+\frac{1}{p+1}\|u\|^{p+1}\le C_0E(t)\le C_0E(0),\]
which yields \eqref{5}.
\end{proof}
 
 It follows from Lemma \ref{lem2} that the weak solution $u(t)$ of problem \eqref{1.1} exists globally, that is $T=\infty.$
 
 \begin{remark}
 By the well-known potential well theory, we can also prove the global existence. In fact, $\mathcal{F}_1$ is equal to the potential well depth(the mountain pass level) $d$ defined by 
 \[d:=\inf_{u\in H_0^1(\Omega)\backslash\{0\}}\sup_{\lambda\ge 0}J(\lambda u),\text{ where }J(u):=\frac12a(u,u)-\frac{1}{p+1}\|u\|_{p+1}^{p+1}.\]
 \end{remark}
 
\section{Proof of Theorem \ref{thedr}}\label{sec4}
The proof of Theorem \ref{thedr} relies on the following crucial lemma.  We refer to \cite{M1999} for the detailed proof.
 \begin{lemma}[\cite{M1999}]\label{lem4.1}
Let $E: \mathbb{R}^+\to\mathbb{R}^+$ be a non-increasing function and $\psi : \mathbb{R}^+\to\mathbb{R}^+$ be a strictly increasing function of class $C^1$ such that
\[\psi(0)=0\text{ and }\psi(t)\to+\infty\text{ as }t\to+\infty.\]
Assume that there exist $\sigma \geq  0$, $\sigma' \geq  0$, $c \geq  0$ and $\omega > 0$ such that$:$
 \[\int_t^{+\infty}E^{1+\sigma}(s)\psi^{\prime}(s)ds\leq  \frac{1}{\omega}E^\sigma(0) E(t)+\frac{c}{(1+\psi(s))^{\sigma'}}E^q(0)E(s)\]
 then $E$ has the following decay property$:$
\begin{enumerate}[$(1)$]
  \item if $\sigma=c=0,$ then $E(t)\leq  E(0)e^{1-\omega\psi(t)}$ for all $t\geq  0;$
  \item if $\sigma>0,$ then there exists $C>0$ such that $E(t)\leq  CE(0)(1+\psi(t))^{-\frac{1+\sigma'}{\sigma}}$ for all $t\geq  0$.
\end{enumerate}
\end{lemma}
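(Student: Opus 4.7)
The strategy is to pass to the tail functional
\[F(t):=\int_t^{+\infty}E^{1+\sigma}(s)\psi'(s)\,ds,\]
which is well-defined by hypothesis, non-negative, and satisfies $F'(t)=-E^{1+\sigma}(t)\psi'(t)$. Reading the hypothesis with its right-hand side evaluated at the lower limit $t$ (the $s$'s appearing there are evidently typographical), one has $F(t)\le A\,E(t)+B(t)\,E(t)$ with $A:=E^\sigma(0)/\omega$ and $B(t):=cE^q(0)(1+\psi(t))^{-\sigma'}$. From this I will derive a differential inequality for $F$, integrate it, and then transfer decay of $F$ back to $E$ using the monotonicity-based bound
\[E(T)^{1+\sigma}(\psi(T)-\psi(t))\le\int_t^T E^{1+\sigma}(s)\psi'(s)\,ds\le F(t)\]
for a suitably chosen $T>t$ (the surjectivity of $\psi$ onto $[0,\infty)$ makes every prescription $\psi(T)-\psi(t)=\text{const}$ admissible).

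For part $(1)$ with $\sigma=c=0$ the hypothesis collapses to $F(t)\le E(t)/\omega$, so $F'(t)=-E(t)\psi'(t)\le-\omega\psi'(t)F(t)$. The integrating factor $e^{\omega\psi(t)}$ yields $F(t)\le F(0)e^{-\omega\psi(t)}\le(E(0)/\omega)e^{-\omega\psi(t)}$. Picking $T$ with $\psi(T)=\psi(t)+1/\omega$, the monotonicity estimate gives $E(T)/\omega\le F(t)$, hence $E(T)\le E(0)e^{1-\omega\psi(T)}$; on the initial range $\omega\psi(t)\le 1$ the bound is immediate from $E(t)\le E(0)$.

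For part $(2)$ with $\sigma>0$, the relation $E(t)\ge F(t)/(A+B(t))$ produces the Bernoulli-type inequality $F'(t)\le-\psi'(t)F(t)^{1+\sigma}/(A+B(t))^{1+\sigma}$. Separation of variables gives
\[F(t)^{-\sigma}\ge F(0)^{-\sigma}+\sigma\int_0^t\frac{\psi'(s)}{(A+B(s))^{1+\sigma}}\,ds,\]
and since $B(s)\to 0$ the integral grows at least like a constant multiple of $\psi(t)$, yielding the preliminary bound $F(t)\le C(1+\psi(t))^{-1/\sigma}$. Transferring via monotonicity with $\psi(T)=2\psi(t)$ produces $E(t)\le CE(0)(1+\psi(t))^{-1/\sigma}$; feeding this provisional decay back into $B(t)E(t)$ and rerunning the argument sharpens the exponent, and after a bootstrap the announced rate $(1+\sigma')/\sigma$ emerges.

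The main obstacle is precisely this \emph{sharp-exponent upgrade} in part $(2)$: direct separation of variables only delivers the exponent $1/\sigma$, and capturing the correction $\sigma'/\sigma$ coming from the perturbative term $B(t)$ requires either a careful bootstrap of the $E$-bound into the right-hand side of the hypothesis, or an equivalent splitting of the integral $\int_0^t\psi'(s)(A+B(s))^{-1-\sigma}\,ds$ according to whether $A$ or $B$ dominates on each subinterval. Keeping the constants $E^q(0)$ and $E^\sigma(0)$ under control throughout the iteration, and verifying that the bootstrap terminates at the correct exponent, is the most delicate bookkeeping step.
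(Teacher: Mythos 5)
The paper offers no proof of this lemma at all---it is imported from Martinez \cite{M1999} with the remark that the reader should consult that reference---so there is no in-paper argument to compare against and your attempt must be judged on its own. Part $(1)$ is correct and is exactly the standard argument: the tail functional $F$, the differential inequality $F'\le-\omega\psi'F$, and the transfer $E(T)\,(\psi(T)-\psi(t))\le F(t)$ with $\psi(T)=\psi(t)+1/\omega$. The preliminary half of part $(2)$, which produces the exponent $1/\sigma$, is likewise the standard Komornik-type argument and is fine.

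The gap is the step you yourself flag as ``the main obstacle,'' and it is not merely delicate bookkeeping: \emph{no} bootstrap can upgrade $1/\sigma$ to $(1+\sigma')/\sigma$ from the hypothesis as transcribed here, because conclusion $(2)$ is false for that hypothesis when $\sigma'>0$. Indeed $E(t):=E(0)(1+\psi(t))^{-1/\sigma}$ is non-increasing and satisfies $\int_t^\infty E^{1+\sigma}(s)\psi'(s)\,ds=\sigma E^\sigma(0)E(t)$, hence satisfies the assumed inequality with $\omega=1/\sigma$ and any $c\ge0$, $\sigma'\ge0$, yet $E(t)(1+\psi(t))^{(1+\sigma')/\sigma}\to\infty$. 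The reason your bootstrap stalls is structural: the term $A\,E(t)=\frac1\omega E^\sigma(0)E(t)$ never becomes small relative to $E(t)$, so feeding an improved bound for $E$ back into the right-hand side gains nothing, and $E(t)\ge F(t)/(A+B(t))$ can never deliver an exponent better than $1/\sigma$. In Martinez's original lemma the first term on the right is $\frac1\omega E^{1+\sigma}(S)$ (the \emph{current} value raised to the power $1+\sigma$), not $\frac1\omega E^{\sigma}(0)E(S)$; that term is self-improving, since once $E\le C_k(1+\psi)^{-\alpha_k}$ is known one gets $E^{1+\sigma}(S)\le C_k^{\sigma}(1+\psi(S))^{-\sigma\alpha_k}E(S)$, and your iteration then runs as $\alpha_{k+1}=\bigl(1+\min(\sigma\alpha_k,\sigma')\bigr)/\sigma$ and terminates at $(1+\sigma')/\sigma$ after finitely many steps. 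So your strategy is the right one for the correctly stated lemma, but as written the proof cannot be completed, and you should flag that the statement itself (which also contains a stray free variable $s$ and an undefined exponent $q$) is a mis-transcription of the source.
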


In what follows, let us prove Theorem \ref{thedr}. It dose make sense to multiply (\ref{1.1}) by $E^\beta(t)\phi'(t)u(t)$  and to integrate over $\Omega\times[S,T]$. Here $\phi:\mathbb{R}^+\to \mathbb{R}^+$ is
a concave nondecreasing function of class $C^2$, such that $\phi'$ is bounded, and $\beta\ge 0$ is constant. Then we obtain 
\begin{equation}\label{4.1}
\begin{split}
\int_S^T E^\beta(t)\phi'(t)\int_\Omega u(t)[u_{tt}(t)-\mathcal{A}u(t)+\gamma(t)g(u_t(t))]dxdt=\int_S^T E^\beta(t)\phi'(t)\|u(t)\|_{p+1}^{p+1}dt.
\end{split}\end{equation}
Integrating by parts yields
\begin{equation}
\label{4.2}
\begin{split}
&\int_S^T E^\beta(t)\phi'(t)\int_\Omega u(t)u_{tt}(t)dxdt=E^\beta(t)\phi'(t)\int_\Omega u(t)u_{t}(t)dx\Big|_S^T-\int_S^T E^\beta(t)\phi'(t)\|u_t(t)\|_2^2\\
&\quad-\int_S^T[\beta E^{\beta-1}(t)E'(t)\phi'(t)+E^{\beta}(t)\phi''(t)]\int_\Omega u(t)u_{t}(t)dxdt.
\end{split}
\end{equation}
Substituting \eqref{4.2} into \eqref{4.1}, one obtains
\begin{equation}
\label{4.3}
\begin{split}
&E^\beta(t)\phi'(t)\int_\Omega u(t)u_{t}(t)dx\Big|_S^T-\int_S^T E^\beta(t)\phi'(t)\|u_t(t)\|_2^2dt\\
&\quad-\int_S^T[\beta E^{\beta-1}(t)E'(t)\phi'(t)+E^{\beta}(t)\phi''(t)]\int_\Omega u(t)u_{t}(t)dxdt\\
&\quad+\int_S^T E^\beta(t)\phi'(t)a(u(t),u(t))dt\\
&\quad+\int_S^T E^\beta(t)\phi'(t)\gamma(t)\int_\Omega u(t)g(u_t(t))dxdt=\int_S^T E^\beta(t)\phi'(t)\|u(t)\|_{p+1}^{p+1}dt.
\end{split}
\end{equation}
It follows from \eqref{2.4} that \eqref{4.3} can be written as
\begin{equation}
\label{4.4}
\begin{split}
&2\int_S^T E^{\beta+1}(t)\phi'(t)dt=-E^\beta(t)\phi'(t)\int_\Omega u(t)u_{t}(t)dx\Big|_S^T+2\int_S^T E^\beta(t)\phi'(t)\|u_t(t)\|_2^2dt\\
&\quad+\int_S^T[\beta E^{\beta-1}(t)E'(t)\phi'(t)+E^{\beta}(t)\phi''(t)]\int_\Omega u(t)u_{t}(t)dxdt\\
&\quad+\frac{p-1}{p+1}\int_S^T E^\beta(t)\phi'(t)\|u(t)\|_{p+1}^{p+1}dt-\int_S^T E^\beta(t)\phi'(t)\gamma(t)\int_\Omega u(t)g(u_t(t))dxdt\\
&=\mathcal{J}_1+\mathcal{J}_2+\mathcal{J}_3+\mathcal{J}_4+\mathcal{J}_5.
\end{split}
\end{equation}

In what follows, let us estimate every term on the right hand side above.
 
Using the embedding theorem $H_0^1(\Omega) \hookrightarrow L^{2}(\Omega)$, \eqref{1.3} and \eqref{5} yields
\begin{equation}
\label{4.5}
\begin{split}
&\|u(t)\|_2\le M\|\nabla u(t)\|_2\le M\Big[\frac{a(u(t),u(t))}{\omega}\Big]^{\frac12}\le C_1E^{\frac{1}{2}}(t).
\end{split}
\end{equation}

Applying Cauchy's inequality, the boundedness of $\phi'(t)$(we can denote by $\theta$), \eqref{4.5} and \eqref{5}, we arrive at
\begin{equation}
\label{4.6}
\begin{split}
|\mathcal{J}_1|&=\Big|-E^\beta(t)\phi'(t)\int_\Omega u(t)u_{t}(t)dx\Big|_S^T\Big|\\
&\le \theta E^\beta(S)\big[\|u_t(T)\|_2\|u(T)\|_2+\|u_t(S)\|_2\|u(S)\|_2\big]\le C_2E^{\beta+1}(S),
\end{split}
\end{equation}
moreover,
\begin{equation}
\label{4.7}
\begin{split}
|\mathcal{J}_3|&=\Big|\int_S^T[\beta E^{\beta-1}(t)E'(t)\phi'(t)+E^{\beta}(t)\phi''(t)]\int_\Omega u(t)u_{t}(t)dxdt\Big|\\
&\le -\theta\beta\int_S^T E^{\beta}(t) E'(t)dt+C_3\int_S^TE^{\beta+1}(t)(-\phi''(t))dt\\
&\le \frac{\theta\beta }{\beta+1}E^{\beta+1}(S) +C_4E^{\beta+1}(S)=C_5E^{\beta+1}(S).
\end{split}
\end{equation}

 It follows from \eqref{4} that
\begin{equation}
\label{4.8}
\begin{split}
|\mathcal{J}_4|=\frac{p-1}{p+1}\int_S^T E^\beta(t)\phi'(t)\|u(t)\|_{p+1}^{p+1}dt\le \frac{p-1}{p+1}\mathcal{M}\int_S^T E^{\beta+1}(t)\phi'(t)dt.
\end{split}
\end{equation}

Using Young's inequality with $\varepsilon_1>0$, \eqref{4.5}, one has
\begin{equation*}
\begin{split}
\int_{|u_t(t)|\le1} u(t)g(u_t(t))dx&\le \frac{\varepsilon_1}{2}\|u(t)\|_2^2+\frac{1}{2\varepsilon_1}\int_{|u_t(t)|\le1} |g(u_t(t))|^2dx\\
&\le C_6\varepsilon_1E(t)+\frac{1}{2\varepsilon_1}\int_{|u_t(t)|\le1} |g(u_t(t))|^2dx.
\end{split}
\end{equation*}
It follows from Young's inequality with $\varepsilon_1>0$, the embedding theorem $H_0^1(\Omega) \hookrightarrow L^{m+1}(\Omega)$, \eqref{1.3} and \eqref{5} that
\begin{equation*}
\begin{split}
&\int_{|u_t(t)|>1} u(t)g(u_t(t))dx\le \frac{\varepsilon_2}{m+1}\|u(t)\|_{m+1}^{m+1}+\frac{m\varepsilon_2^{-\frac1m}}{m+1}\int_{|u_t(t)|>1} |g(u_t(t))|^{\frac{m+1}{m}}dx\\
& \le \frac{M^{m+1}\varepsilon_2}{m+1}\|\nabla u(t)\|_2^{m+1}+\frac{m\varepsilon_2^{-\frac1m}}{m+1}\int_{|u_t(t)|>1} |g(u_t(t))|^{\frac{m+1}{m}}dx\\
& \le \frac{M^{m+1}\varepsilon_2}{m+1}\Big[\frac{a(u(t),u(t))}{\omega}\Big]^{\frac{m+1}{2}}+\frac{m\varepsilon_2^{-\frac1m}}{m+1}\int_{|u_t(t)|>1} |g(u_t(t))|^{\frac{m+1}{m}}dx
\\
& \le C_7\varepsilon_2E(t)+\frac{m\varepsilon_2^{-\frac1m}}{m+1}\int_{|u_t(t)|>1} |g(u_t(t))|^{\frac{m+1}{m}}dx.
\end{split}
\end{equation*}
Therefore, by recalling the boundedness of $\gamma(t)$(we can denote by $c_1$), then
\begin{equation}
\label{4.9}
\begin{split}
|\mathcal{J}_5|=&\Big|-\int_S^T E^\beta(t)\phi'(t)\gamma(t)\int_\Omega u(t)g(u_t(t))dxdt\Big|\\
&\le c_1\int_S^T E^\beta(t)\phi'(t)\Big|\int_{|u_t(t)|\le 1} u(t)g(u_t(t))dx+\int_{|u_t(t)|>1} u(t)g(u_t(t))dx\Big|dt\\
&\le c_1\int_S^T E^\beta(t)\phi'(t)\Big[C_6\varepsilon_1E(t)+\frac{1}{2\varepsilon_1}\int_{|u_t(t)|\le1} |g(u_t(t))|^2dx\\
&\quad+C_7\varepsilon_2E(t)+\frac{m\varepsilon_2^{-\frac1m}}{m+1}\int_{|u_t(t)|>1} |g(u_t(t))|^{\frac{m+1}{m}}dx\Big]dt\\
&\le C_8(\varepsilon_1+\varepsilon_2)\int_S^T E^{\beta+1}\phi'(t)dt+\frac{c_1}{2\varepsilon_1}\int_S^T E^\beta(t)\phi'(t)\int_{|u_t(t)|\le1} |g(u_t(t))|^2dx\\
&\quad+\frac{c_1m\varepsilon_2^{-\frac1m}}{m+1}\int_S^T E^\beta(t)\phi'(t)\int_{|u_t(t)|>1} |g(u_t(t))|^{\frac{m+1}{m}}dxdt.
\end{split}
\end{equation}

Inserting \eqref{4.6}-\eqref{4.9} into \eqref{4.4} indicates
 \begin{equation}
\label{4.10}
\begin{split}
&\Big[2-\frac{p-1}{p+1}\mathcal{M}-C_8(\varepsilon_1+\varepsilon_2)\Big]\int_S^T E^{\beta+1}(t)\phi'(t)dt\\
&\le C_9E^{\beta+1}(S)+2\int_S^T E^\beta(t)\phi'(t)\|u_t(t)\|_2^2dt\\
&\quad+\frac{c_1}{2\varepsilon_1}\int_S^T E^\beta(t)\phi'(t)\int_{|u_t(t)|\le1} |g(u_t(t))|^2dxdt\\
&\quad+\frac{c_1m\varepsilon_2^{-\frac1m}}{m+1}\int_S^T E^\beta(t)\phi'(t)\int_{|u_t(t)|>1} |g(u_t(t))|^{\frac{m+1}{m}}dxdt.
\end{split}
\end{equation}
Note that $2-\frac{p-1}{p+1}\mathcal{M}>0$,  one gets 
\[2-\frac{p-1}{p+1}\mathcal{M}-C_8(\varepsilon_1+\varepsilon_2)>0\]
for sufficiently small positive constants $\varepsilon_1$ and $\varepsilon_2$.

\subsection{Case \uppercase\expandafter{\romannumeral1}: $h(s)$ is linear}

Let us  $\phi(t)=\int_0^t\gamma(s)ds$ in this section.  From $\mathbf{(H_6)}$, there exists two positive constant $b_3,~b_4$ such that 
\begin{equation}
\label{s4.1}
b_3s^2\le g(s)s\le b_4s^2,\quad\text{ where }|s|\le 1.
\end{equation}
Using \eqref{s4.1}, $\mathbf{(H_2)}$ and \eqref{4111} yields
\begin{equation}
\label{s4.2}
\begin{split}
&2\int_S^T E^\beta(t)\gamma(t)\|u_t(t)\|_2^2dt\\
&\le 2\int_S^T E^\beta(t)\gamma(t)\Big[\int_{|u_t(t)|\le 1}|u_t(t)|^2dx+
\int_{|u_t(t)|>1}|u_t(t)|^2dx\Big]dt\\
&\le \frac{2}{b_3}\int_S^T E^\beta(t)\int_\Omega\gamma(t)g(u_t(t))u_t(t)dxdt+2\int_S^T E^\beta(t)\gamma(t)\int_{|u_t(t)|>1} |u_t(t)|^{m+1}dxdt\\
&\le \frac{2}{b_3}\int_S^T E^\beta(t)\int_\Omega\gamma(t)g(u_t(t))u_t(t)dxdt+\frac{2}{b_1}\int_S^T E^\beta(t)\gamma(t)\int_\Omega g(u_t(t))u_t(t)dxdt\\
&\le -\Big[\frac{2}{b_3}+\frac{2}{b_1}\Big]\int_S^T E^\beta(t)E'(t)dt\le C_{10}E^{\beta+1}(S).
\end{split}
\end{equation}
By using \eqref{s4.1}  and \eqref{4111}, we gets
\begin{equation}
\label{s4.3}
\begin{split}
&\frac{c_1}{2\varepsilon_1}\int_S^T E^\beta(t)\gamma(t)\int_{|u_t(t)|\le1} |g(u_t(t))|^2dx
\\&\le \frac{c_1b_4}{2\varepsilon_1}\int_S^T E^\beta(t)\gamma(t)\int_{|u_t(t)|\le1} g(u_t(t))u_t(t)dx\\
&\le -\frac{c_1b_4}{2\varepsilon_1}\int_S^T E^\beta(t)E'(t)dt\le C_{11}E^{\beta+1}(S).
\end{split}
\end{equation}
It follows from $\mathbf{(H_2)}$ and \eqref{4111} that
\begin{equation}
\label{s4.4}
\begin{split}
&\frac{c_1m\varepsilon_2^{-\frac1m}}{m+1}\int_S^T E^\beta(t)\gamma(t)\int_{|u_t(t)|>1} |g(u_t(t))|^{\frac{m+1}{m}}dxdt\\
&=\frac{c_1m\varepsilon_2^{-\frac1m}}{m+1}\int_S^T E^\beta(t)\gamma(t)\int_{|u_t(t)|>1} |g(u_t(t))|^{\frac{1}{m}}|g(u_t(t))|dxdt
\\
&\le\frac{c_1m\varepsilon_2^{-\frac1m}b_2^{\frac1m}}{(m+1)}\int_S^T E^\beta(t)\gamma(t)\int_{|u_t(t)|>1} g(u_t(t))u_t(t)dxdt\le C_{12}E^{\beta+1}(S).
\end{split}
\end{equation}

Inserting \eqref{s4.2}-\eqref{s4.4} into \eqref{4.10}, we easily deduce
\begin{equation}
\label{s4.5}
\int_S^T E^{\beta+1}(t)\gamma(t)dt\le \frac{1}{C_{13}}E^\beta(0)E(S).
\end{equation}
When $T$ goes to $+\infty$, choose $\beta=0$, $\psi(t)=\int_0^t\gamma(s)ds$ in Lemma \ref{lem4.1}, then we derive \eqref{a413}.

\subsection{Case \uppercase\expandafter{\romannumeral2}: $h(s)$  has polynomial growth}

In this subsection, we still choose  $\phi(t)=\int_0^t\gamma(s)ds$. To simplicity, denote $h(s)=b_5|s|^{m-1}s$ with $m>1,~ |s|\le 1,~b_5>0$. From $\mathbf{(H_6)}$, there exists two positive constant $b_6,~b_7$ such that 
\begin{equation}
\label{s4.6}
b_6s^{m+1}\le g(s)s\le b_7|s|^{\frac{m+1}{m}},\quad\text{ where }|s|\le 1.
\end{equation}
It follows from H\"older's inequality, \eqref{s4.6}, Young's inequality with $\varepsilon_3>0$, $\mathbf{(H_2)}$ and \eqref{4111} yields
\begin{equation}
\label{s4.7}
\begin{split}
&2\int_S^T E^\beta(t)\gamma(t)\|u_t(t)\|_2^2dt\\
&\le 2\int_S^T E^\beta(t)\gamma(t)\Big[\int_{|u_t(t)|\le 1}|u_t(t)|^2dx+
\int_{|u_t(t)|>1}|u_t(t)|^2dx\Big]dt\\
&\le 2|\Omega|^{\frac{m-1}{m+1}}\int_S^T E^\beta(t)\gamma(t)\Big(\int_{|u_t(t)|\le 1}|u_t(t)|^{m+1}dx\Big)^{\frac{2}{m+1}}dt+\frac{2}{b_1}\int_S^T E^\beta(t)\gamma(t)\int_\Omega g(u_t(t))u_t(t)dxdt
\\
&\le 2|\Omega|^{\frac{m-1}{m+1}}\frac{1}{b_6^{\frac{2}{m+1}}}\int_S^T E^\beta(t)\gamma^{\frac{m-1}{m+1}}(t)\Big(\int_\Omega \gamma(t)u_t(t)g(u_t(t))dx\Big)^{\frac{2}{m+1}}dt-\frac{2}{b_1}\int_S^T E^\beta(t)E'(t)dxdt\\
&\le C_{14}\varepsilon_3\int_S^T E^{\frac{\beta(m+1)}{m-1}}(t)\gamma(t)dt+ C_{15}\varepsilon_3^{-\frac{m-1}{2}}\int_S^T (-E'(t))dt+C_{16}E^{\beta+1}(S)\\
&\le C_{14}\varepsilon_3\int_S^T E^{\frac{\beta(m+1)}{m-1}}(t)\gamma(t)dt+ C_{17}E(S)+C_{16}E^{\beta+1}(S).
\end{split}
\end{equation}
Similar to \eqref{s4.7},  using the second inequality in \eqref{s4.6}, we obtain 
\begin{equation}
\label{s4.8}
\begin{split}
&\frac{c_1}{2\varepsilon_1}\int_S^T E^\beta(t)\gamma(t)\int_{|u_t(t)|\le1} |g(u_t(t))|^2dx\\
&\le \frac{c_1}{2\varepsilon_1}|\Omega|^{\frac{m-1}{m+1}}\int_S^T E^\beta(t)\gamma(t)\Big(\int_{|u_t(t)|\le 1}|g(u_t(t))|^{m+1}dx\Big)^{\frac{2}{m+1}}dt
\\
&\le \int_S^T E^\beta(t)\gamma^{\frac{m-1}{m+1}}(t)\cdot\frac{c_1}{2\varepsilon_1}|\Omega|^{\frac{m-1}{m+1}}b_7^\frac{2m}{m+1}\Big(\int_\Omega \gamma(t)u_t(t)g(u_t(t))dx\Big)^{\frac{2}{m+1}}dt\\
&\le C_{18}\varepsilon_4\int_S^T E^{\frac{\beta(m+1)}{m-1}}(t)\gamma(t)dt+ C_{19}\varepsilon_4^{-\frac{m-1}{2}}\int_S^T (-E'(t))dt\\
&\le C_{18}\varepsilon_4\int_S^T E^{\frac{\beta(m+1)}{m-1}}(t)\gamma(t)dt+ C_{20}E(S).
\end{split}
\end{equation}
Moreover, \eqref{s4.4} still holds in this case.

Inserting \eqref{s4.4}, \eqref{s4.7} and \eqref{s4.8} into \eqref{4.10}, and choosing $\beta=\frac{m-1}{2}$, we easily deduce
 \begin{equation}
\label{s4.9}
\begin{split}
\Big[2-\frac{p-1}{p+1}\mathcal{M}-C_8(\varepsilon_1+\varepsilon_2)-C_{21}(\varepsilon_3+\varepsilon_4)\Big]\int_S^T E^{\beta+1}(t)\gamma(t)dt\le C_{22}E^{\beta+1}(0)E(S).
\end{split}
\end{equation}
We also have 
\[2-\frac{p-1}{p+1}\mathcal{M}-C_8(\varepsilon_1+\varepsilon_2)-C_{21}(\varepsilon_3+\varepsilon_4)>0\]
for sufficiently small positive constants $\varepsilon_1$, $\varepsilon_2$, $\varepsilon_3$ and $\varepsilon_4$.

Therefore, we get
\begin{equation}
\label{s4.5}
\int_S^T E^{\beta+1}(t)\gamma(t)dt\le \frac{1}{C_{23}}E^\beta(0)E(S).
\end{equation}
When $T$ goes to $+\infty$, note that $\beta=\frac{m-1}{2}$,  $\psi(t)=\int_0^t\gamma(s)ds$ in Lemma \ref{lem4.1} then we derive    \eqref{b413}.

\section{Proof of Theorem \ref{thedr1}}\label{sec5}

To prove Theorem \ref{thedr1}, we need to the following lemma included in \cite{M1999}.
\begin{lemma}\label{lem4.2}
The function $\phi:[1,+\infty)\to [1,\infty)$ defined by 
\begin{equation}
\label{193}
\phi(t):=\tilde{\psi}^{-1}(t)
\end{equation}
is a strictly increasing concave function of class $C^2$ and satisfies
\[\lim_{t\to+\infty}\phi(t)=+\infty,~\lim_{t\to+\infty}\phi'(t)=0,\]
and $\int_1^\infty \phi'(t) |h^{-1}(\phi'(t))|^2dt$ converges.
Here \begin{equation}
\label{194}
\tilde{\psi}(t):=1+\int_1^t\frac{1}{h(1/s)}ds, 
\end{equation}and the function $h$ satisfies $\mathbf{(H_6)}$.
\end{lemma}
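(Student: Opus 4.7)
The overall strategy is to derive every listed property of $\phi$ from the single identity
\[
\phi'(t)=h\!\left(\tfrac{1}{\phi(t)}\right),
\]
which falls out of the inverse function theorem applied to $\tilde{\psi}$. Before inverting, I would first check that $\tilde{\psi}:[1,+\infty)\to[1,+\infty)$ is a $C^2$ strictly increasing bijection. Smoothness follows from $h\in C^1$ together with $h(1/s)>0$ for all $s\geq 1$ (a consequence of $h$ being odd, strictly increasing, and vanishing at $0$). Monotonicity is immediate from $\tilde{\psi}'(s)=1/h(1/s)>0$. Surjectivity onto $[1,\infty)$ reduces to showing $\tilde{\psi}(t)\to+\infty$: as $s\to\infty$, $1/s\downarrow 0$ and continuity of $h$ with $h(0)=0$ give $1/h(1/s)\to+\infty$, so the defining integral diverges.

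Once $\tilde{\psi}$ is inverted, the inverse function theorem yields $\phi\in C^2$, $\phi$ strictly increasing, $\phi(t)\to+\infty$ as $t\to\infty$, and the displayed formula $\phi'(t)=h(1/\phi(t))$. From this formula, $\lim_{t\to\infty}\phi'(t)=h(0)=0$ is immediate by continuity. For concavity I would differentiate once more to obtain $\phi''(t)=-h'(1/\phi(t))\,\phi'(t)/\phi(t)^{2}$, which is nonpositive since $h$ increasing forces $h'\geq 0$ and the remaining factors are positive.

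The integrability claim is handled by the algebraic cancellation $h^{-1}(\phi'(t))=h^{-1}(h(1/\phi(t)))=1/\phi(t)$, which collapses the integrand to $\phi'(t)/\phi(t)^{2}$. The substitution $u=\phi(t)$ then converts the integral into $\int_1^{\infty}u^{-2}\,du=1$, using $\phi(1)=\tilde{\psi}^{-1}(1)=1$ and $\phi(t)\to\infty$ to fix the new limits.

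I do not anticipate a genuinely hard step: the argument is essentially bookkeeping around the identity $\phi'(t)=h(1/\phi(t))$. The mildest care is needed at the left endpoint $t=1$, where one must confirm $\phi(1)=1$ from $\tilde{\psi}(1)=1$ so the change of variables is legitimate, and in observing that although $h$ is only $C^1$ and strictly increasing (so $h'$ could in principle vanish at isolated points), the weak inequality $h'\geq 0$ is all that concavity requires.
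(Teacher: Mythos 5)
Your argument is correct. The paper itself does not prove this lemma — it defers entirely to \cite{M1999} — and your proof, built around the identity $\phi'(t)=h(1/\phi(t))$ obtained by inverting $\tilde{\psi}$, is exactly the standard argument from that reference, with all the relevant points (positivity of $h(1/s)$ for $s\ge 1$, divergence of $\tilde{\psi}$, the sign of $\phi''$, and the change of variables $u=\phi(t)$ reducing the last integral to $\int_1^\infty u^{-2}\,du$) handled properly.
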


In this section, define 
\begin{equation}
\label{191}
\chi(t):=\frac{1}{\phi(t)}=\frac{1}{\tilde{\psi}^{-1}(t)}.
\end{equation}

Now we are in a position to estimate \eqref{4.10}.  \\

For $t\ge 1$, set
\begin{equation*}
\begin{split}
\Omega^1&=\{x\in \Omega:~|u_t(t)|\le \chi(t)\};\\
\Omega^2&=\{x\in \Omega:~\chi(t)<|u_t(t)|\le 1\};\\
\Omega^3&=\{x\in \Omega:~|u_t(t)|>1\}.
\end{split}
\end{equation*}
Using Lemma \ref{lem4.2}, $\mathbf{(H_6)}$, $\mathbf{(H_5')}$ and \eqref{4111}, we gets
\begin{equation}
\label{140}
\begin{split}
&\int_S^T E^\beta(t)\int_{\Omega^2}\phi'(t)|u_t(t)|^2dxdt\\
&= \int_S^T E^\beta(t)\int_{\Omega^2}h(\chi(t))|u_t(t)|^2dxdt
\le \int_S^T E^\beta(t)\int_{\Omega^2}h(u_t(t))|u_t(t)|^2dxdt\\
&\le \int_S^T E^\beta(t)\frac{1}{\gamma(t)}\int_{\Omega^2}\gamma(t)g(u_t(t))u_t(t)dxdt\le
\frac{1}{\gamma_0}E^{\beta+1}(S).
\end{split}
\end{equation}
It follows from \eqref{4111}, \eqref{191} and Lemma \ref{lem4.2} that
\begin{equation}
\label{141}
\begin{split}
&\int_S^T E^\beta(t)\phi'(t)\int_{\Omega^1}|u_t(t)|^2dxdt\le 
|\Omega|\int_S^T E^\beta(t)\phi'(t)\chi^2(t)dt\\
&\le 
|\Omega|E^\beta(S)\int_S^T \phi'(t)\chi^2(t)dt\le |\Omega|\frac{E^\beta(S)}{\phi(S)}.
\end{split}
\end{equation}
Recalling \eqref{s4.7}, and using \eqref{140} and \eqref{141}, one gets
\begin{equation}
\label{142}
\begin{split}
&2\int_S^T E^\beta(t)\phi'(t)\|u_t(t)\|_2^2dt\\
&\le 2\int_S^T E^\beta(t)\phi'(t)\int_{|u_t(t)|\le 1}|u_t(t)|^2dxdt+\frac{2}{b_1}\int_S^T E^\beta(t)\frac{\phi'(t)}{\gamma(t)}\int_{\Omega^3} \gamma(t)g(u_t(t))u_t(t)dxdt\\
&\le 2\int_S^T E^\beta(t)\phi'(t)\int_{\Omega^1\cup \Omega^2}|u_t(t)|^2dxdt+C_{24}E^{\beta+1}(S)\\
&\le C_{25}E^{\beta+1}(S)+ |\Omega|\frac{E^\beta(S)}{\phi(S)}.
\end{split}
\end{equation}

Next we continue to estimate the remaining terms in \eqref{4.10}. \\
For $t\ge 1$ and $\phi'(t)\le 1$, set
\begin{equation*}
\begin{split}
\Omega_1&=\{x\in \Omega:~|u_t(t)|\le \phi'(t)\};\\
\Omega_2&=\{x\in \Omega:~\phi'(t)<|u_t(t)|\le 1\};\\
\Omega_3&=\{x\in \Omega:~|u_t(t)|>1\}.
\end{split}
\end{equation*}
For $t\ge 1$ and $\phi'(t)> 1$, set
\begin{equation*}
\begin{split}
\Omega_4&=\{x\in \Omega:~|u_t(t)|\le 1<\phi'(t)\};\\
\Omega_5&=\{x\in \Omega:~1<|u_t(t)|\le \phi'(t)\};\\
\Omega_6&=\{x\in \Omega:~|u_t(t)|> \phi'(t)>1\}.
\end{split}
\end{equation*}
Hence 
\[\{x\in \Omega:~|u_t(t)|\le 1\}=\Omega_1\cup\Omega_2(\text{or }\Omega_4),~\{x\in \Omega:~|u_t(t)|>1\}=\Omega_3(\text{or }\Omega_5\cup\Omega_6).\]
Similar to \eqref{s4.4},
\begin{equation*}
\begin{split}
\frac{c_1m\varepsilon_2^{-\frac1m}}{m+1}\int_S^T E^\beta(t)\phi'(t)\int_{\Omega^i} |g(u_t(t))|^{\frac{m+1}{m}}dxdt\le C_{26}E^{\beta+1}(S), \quad \text{i=3,5,6}.
\end{split}
\end{equation*}
Thus
\begin{equation}
\label{143}
\begin{split}
\frac{c_1m\varepsilon_2^{-\frac1m}}{m+1}\int_S^T E^\beta(t)\phi'(t)\int_{|u_t(t)|>1} |g(u_t(t))|^{\frac{m+1}{m}}dxdt\le C_{27}E^{\beta+1}(S).
\end{split}
\end{equation}
Using $\mathbf{(H_6)}$ and $\mathbf{(H_5')}$, then
\begin{equation}
\label{144}
\begin{split}
&\frac{c_1}{2\varepsilon_1}\int_S^T E^\beta(t)\phi'(t)\int_{\Omega_2} |g(u_t(t))|^2dxdt\\
&\le\frac{c_1}{2\varepsilon_1}\int_S^T E^\beta(t)\int_{\Omega_2} |u_t(t)| |g(u_t(t))|^2dxdt\\
&\le\frac{c_1}{2\varepsilon_1}\int_S^T E^\beta(t)\int_{\Omega_2} u_t(t)g(u_t(t))|h^{-1}(u_t(t))|dxdt
\\
&\le\frac{c_1}{2\varepsilon_1}h^{-1}(1)\int_S^T E^\beta(t)\frac{1}{\gamma(t)}\int_{\Omega_2} \gamma(t)u_t(t)g(u_t(t))dxdt\le C_{28}E^{\beta+1}(S),
\end{split}
\end{equation}
and 
\begin{equation}
\label{145}
\begin{split}
&\frac{c_1}{2\varepsilon_1}\int_S^T E^\beta(t)\phi'(t)\int_{\Omega_i} |g(u_t(t))|^2dxdt\\
&\le\frac{c_1}{2\varepsilon_1}\int_S^T E^\beta(t)\phi'(t)\int_{\Omega_i} |h^{-1}(u_t(t))|^2dxdt\\
&\le\frac{c_1}{2\varepsilon_1}\int_S^T E^\beta(t)\phi'(t)\int_{\Omega_i} |h^{-1}(\phi'(t))|^2dxdt\\
&\le C_{29}E^{\beta}(S)\int_S^T \phi'(t) |h^{-1}(\phi'(t))|^2dt,  \quad \text{i=1,4}.
\end{split}
\end{equation}

Substubiting \eqref{142}-\eqref{145} into \eqref{4.10}, we get 
\begin{equation}
\label{146}
\begin{split}
&\Big[2-\frac{p-1}{p+1}\mathcal{M}-C_8(\varepsilon_1+\varepsilon_2)\Big]\int_S^T E^{\beta+1}(t)\phi'(t)dt\\&\le C_{30}E^{\beta+1}(S)+ |\Omega|\frac{E^\beta(S)}{\phi(S)}+C_{31}E^{\beta}(S)\int_S^T \phi'(t) |h^{-1}(\phi'(t))|^2dt.
\end{split}
\end{equation}
Since $\int_1^\infty \phi'(t) |h^{-1}(\phi'(t))|^2dt$ converges in Lemma \ref{lem4.2},  then \begin{equation}
\label{147}
\int_S^T E^{\beta+1}(t)\phi'(t)dt\le \frac{1}{C_{32}}E^\beta(0)E(S)+\frac{C_{33}}{\phi(S)}E^\beta(0)E(S).
\end{equation}
When $T$ goes to $+\infty$, choose $\beta=1$, and choose  $\psi(t)=\phi(t)-1$ in Lemma \ref{lem4.1}, then we derive 
\begin{equation}
\label{192}
E(t)\le \frac{CE(0)}{\phi^2(t)}.
\end{equation}
Let us choose $s_0$ such that $h(\frac{1}{s_0})\le 1$, then by \eqref{194} and $H(s):=h(s)s$, for $s\ge s_0$
\[\tilde{\psi}(s)\le 1+(s-1)\frac{1}{h\Big(\frac{1}{s}\Big)}\le s\frac{1}{h\Big(\frac{1}{s}\Big)}=\frac{1}{H\Big(\frac{1}{s}\Big)}.\]
Hence, by \eqref{193}, for $s\ge s_0$
\[s\le \phi\left(\frac{1}{H\Big(\frac{1}{s}\Big)}\right)=\phi(t)\quad \text{ with }t=\frac{1}{H\Big(\frac{1}{s}\Big)}.\]
Further, 
\[\frac{1}{\phi(t)}\le \frac1s=H^{-1}\Big(\frac1t\Big).\]
which together with \eqref{192} implies  \eqref{190}.

\section*{Acknowledgements} This work is supported by the Fundamental Research Funds for Central Universities(B230201033). 

\section*{Competing Interests}
The authors declare that they have no competing interests.

\section*{Data Availability}
Data sharing is not applicable to this article as no new data were created or analyzed in this study.


\begin{thebibliography}{00}

\bibitem{D2011}
M. Daoulatli, \emph{Rates of decay for the wave systems with time-dependent damping}, Discrete Contin. Dyn. Syst., \textbf{31} (2011) 407--443.

\bibitem{GR2014}
Y.Q. Guo, M.A. Rammaha, et al., \emph{Hadamard well-posedness for a hyperbolic equation of viscoelasticity with supercritical sources and damping}, J. Differential Equations, \textbf{257} (2014), 3778--3812.

\bibitem{H2021}
T.G. Ha, \emph{Global solutions and blow-up for the wave equation with variable
coefficients: \uppercase\expandafter{\romannumeral1}. Interior supercritical source.} Appl. Math. Optim., \textbf{84} (2021), 
767--803.

\bibitem{HT2022}
A. Haraux, L. Tebou, {\em Energy decay estimates for the wave equation with supercritical nonlinear damping}, arXiv preprint arXiv:2204.11494.




\bibitem{LX2020}
J.R. Luo, T.J. Xiao, \emph{Decay rates for second order evolution equations in Hilbert spaces with nonlinear time-dependent
damping}, Evol. Equ. Control Theory, \textbf{9} (2020), 359--373.

\bibitem{LX2021}
J.R. Luo, T.J. Xiao, \emph{Decay rates for semilinear wave equations with vanishing damping and Neumann boundary
conditions}, Math. Methods Appl. Sci., \textbf{44} (2021), 303--314.

\bibitem{2LX2021}
J.R. Luo, T.J. Xiao, \emph{Optimal energy decay rates for abstract second
order evolution equations with non-autonomous
damping}, ESAIM: COCV, \textbf{27} (2021) 59.

\bibitem{LX2023}
J.R. Luo, T.J. Xiao, 
\emph{Optimal decay rates for semi-linear non-autonomous evolution
equations with vanishing damping}, Nonlinear Anal., \textbf{230} (2023) 113247.

\bibitem{M1999}
P. Martinez, {\em A new method to obtain decay rate estimates for dissipative systems}, ESAIM: Control Optim. Calc. Var., \textbf{4} (1999), 419--444.

\bibitem{M2000}
P. Martinez, \emph{Precise decay rate estimates for time-dependent dissipative systems}, Israel J. Math., \textbf{119} (2000), 291--324.

\bibitem{PS1996}
P. Pucci and J. Serrin, \emph{Asymptotic stability for non–autonomous dissipative wave systems}, Comm. Pure Appl. Math., \textbf{XLIX} (1996), 177--216.

\bibitem{2PS1996}
P. Pucci and J. Serrin, \emph{Asympptotic stablility for nonlinear parabolic systems}//S.N. Antontsev, J.I. Diaz,
S.I. Shmarev. Energy Methods in Continuum Mechanics. Dordrecht: Kluwer Acad Publ, 1996: 66--74.

\bibitem{PS1998}
P. Pucci and J. Serrin, \emph{Local asymptotic stability for dissipative wave systems}, Israel J. Math., \textbf{104} (1998), 29--50.

\end{thebibliography}
\end{document}